\newtheorem{thm}{Theorem}
\newtheorem{corollary}{Corollary}
\newtheorem{lemma}{Lemma}
\newtheorem{example}{Example}
\newtheorem{definition}{Definition}
\newtheorem{remark}{Remark}
\newtheorem{prop}{Proposition}
\newtheorem{claim}{Claim}
\newtheorem{observation}{Observation}
\newtheorem*{conjecture}{Conjecture}
\newtheorem*{rlemma}{Reduction Lemma}
\newtheorem*{mprop}{Main Proposition}
\newenvironment{Remark}{\begin{remark}\rm}{\end{remark}}
\def\et{\quad\mbox{and}\quad}
\def\vel{\;\mbox{or}\;}
\def\for{\quad\mbox{for }}
\def\where{\quad\mbox{where }}
\def\N{\mathbb{N}}
\def\R{\mathbb{R}}
\def\Z{\mathbb{Z}}
\def\epsilon{\varepsilon}
\def\s{{\sigma}}
\def\S{\widetilde{\sigma}}
\def\b{{\rm{b_1}}}
\def\l{{l}}
\begin{document}

\author{Peter Feller}
\address{Boston College, Department of Mathematics, Maloney Hall, Chestnut Hill, MA 02467, United States}
\email{peter.feller.2@bc.edu}
\keywords{Positive braids, signature, topological slice genus}
\subjclass[2010]{57M25,  57M27}

\title[The signature of positive braids is linearly bounded]{The signature of positive braids is linearly bounded by their first Betti number}

\begin{abstract}
We provide linear lower bounds for the signature of positive braids in terms of the three genus of their braid closure.
This yields linear bounds for the topological slice genus of knots that arise as closures of positive braids.\\[0.3cm]
\end{abstract}

\maketitle

\renewcommand{\thesubsection}{\arabic{subsection}}

\section{Introduction}

Let $B_b$ be the {braid group} on $b$ strands as introduced by Artin~\cite{Artin_TheorieDerZoepfe}.
A group presentation for $B_b$ is given by the (standard) generators $ a_1,\cdots, a_{b-1}$ satisfying the {braid relations}
\[ a_{i} a_{j}= a_{j} a_{i} \for |i-j|\geq 2\et  a_{i} a_{j} a_{i}= a_{j} a_{i} a_{j} \for |i-j|=1. \]
A \emph{positive braid} on $b$ strands is an element $\beta$ in $B_b$ that can be written as positive braid word $ a_{s_{1}} a_{s_{2}}\cdots a_{s_{\l}}$ with $s_{i}\in\{1,\cdots,b-1\}$,
where $\l$, which is independent of the choice of a positive braid word for $\beta$,
is called the \emph{length} of $\beta$.
Links obtained as the (standard) closure of positive braids contain much studied classes of links;
for example, torus links, algebraic links and Lorenz links.
The \emph{signature} is a classical link invariant introduced by Trotter~\cite{Trotter_62_HomologywithApptoKnotTheory}.
The \emph{first Betti number} $\b(L)$ of a link $L$ is defined to be the smallest first Betti number of Seifert surfaces for $L$.
The \emph{(three) genus} of a knot is half its first Betti number.
By the first Betti number $\b(\beta)$ and the signature $\s(\beta)$ of a braid $\beta$, we mean the first Betti number and the signature, respectively, of the link obtained as the closure of $\beta$.
A braid is said to be \emph{non-trivial} if its closure is not an unlink; that is if its first Betti number is non-zero.

In this paper we relate the first Betti number and the signature for positive braids.
All that follows is motivated by the following
conjecture.
\begin{conjecture}\label{conj:c>0}For all non-trivial positive braids $\beta$, the signature is linearly bounded as follows
$\b(\beta)\geq{-\s}(\beta)>\frac{1}{2}\b(\beta).$
\end{conjecture}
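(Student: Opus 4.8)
The plan is to treat the two inequalities separately. The upper bound $\b(\beta)\geq-\s(\beta)$ is the elementary one: the symmetrized Seifert form $M=V+V^{\mathsf T}$ of any Seifert surface is a symmetric matrix of size $\b(\beta)\times\b(\beta)$, so $|\s(\beta)|\leq\b(\beta)$, and since positive braid closures are known to satisfy $\s(\beta)\leq 0$ (all positive generators contribute crossings of the same sign), this gives $-\s(\beta)\leq\b(\beta)$. Throughout I would work with the fiber surface $\Sigma$ produced by Seifert's algorithm on the closed braid diagram, which for positive braids is a minimal-genus Seifert surface, so that $\b(\beta)=\l-b+1$ whenever every generator occurs (the remaining cases split off unlinked pieces and are handled separately).

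The content is the lower bound $-\s(\beta)>\tfrac12\b(\beta)$, equivalently $\s(\beta)<-\tfrac12\b(\beta)$. I would compute $M$ in the combinatorial basis of $H_1(\Sigma)$ coming from the brick diagram of the word $a_{s_1}\cdots a_{s_\l}$: for each level $i$ the generator $a_i$ occurring $k_i$ times produces $k_i-1$ horizontal loops, and $\sum_i(k_i-1)=\b(\beta)$. In this basis $M=-2\,\mathrm{Id}+N$, where $N$ has entries in $\{-1,0,1\}$: within a single level the loops give the tridiagonal pattern of the torus link $T(2,k_i)$ (diagonal $-2$, neighbours $1$), loops on adjacent levels that interleave pair by $\pm1$, and loops on non-adjacent levels are orthogonal. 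Writing $n_+,n_-,n_0$ for the numbers of positive, negative and zero eigenvalues of $M$, one has $\s(\beta)=2n_++n_0-\b(\beta)$, so the target is equivalent to
\[ 2\,n_+(M)+n_0(M)<\tfrac12\,\b(\beta). \]
Thus everything reduces to controlling how many eigenvalues of $N$ are $\geq 2$: the within-level blocks are the path matrices, whose eigenvalues lie strictly below $2$ (this is exactly why each $T(2,k)$ is negative definite and realises the ratio $1$), so the only source of non-negative eigenvalues of $M$ is the coupling between adjacent levels.

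For the reduction I would argue inductively via a Reduction Lemma that removes or splits low-complexity features while controlling the pair $(\b,\s)$: a generator occurring only once adds no homology and lets the diagram be simplified or cut, with $\b$ and $\s$ behaving additively across such a cut. This should reduce a general positive braid to configurations in which every level is heavily used and tightly coupled to its neighbours, the regime to be treated by a Main Proposition. There I would bound $n_+(M)+n_0(M)$ by the number of genuine branch configurations of the brick diagram — the places where a loop couples to two neighbouring levels at once, i.e.\ where $\Sigma$ is locally more than a plumbing of $T(2,\cdot)$ pieces — using eigenvalue interlacing to charge each positive eigenvalue of $M$ to such a configuration.

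The step I expect to be the genuine obstacle is obtaining the \emph{sharp} constant $\tfrac12$. The bound is essentially tight: for torus knots one has $\s\bigl(T(p,q)\bigr)\sim-\tfrac12\,pq$ while $\b=(p-1)(q-1)\sim pq$, so the ratio $-\s/\b$ tends to $\tfrac12$ from above, and any successful argument must be asymptotically lossless. A straightforward Gershgorin or interlacing estimate controls $n_+(M)$ by a positive fraction of $\b(\beta)$ and hence yields a linear bound $-\s(\beta)\geq c\,\b(\beta)$ with some explicit $c>0$, but pushing $c$ up to $\tfrac12$ requires an exact accounting of the positive eigenvalues contributed by the inter-level couplings, with no slack — precisely the point where the torus-knot examples saturate the inequality. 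I would therefore expect a complete proof of the displayed conjecture to require a delicate, possibly surface-theoretic rather than purely matrix-theoretic, analysis of these couplings, and I would first establish the weaker linear bound before attempting the sharp constant.
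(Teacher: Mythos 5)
The statement you are trying to prove is stated in the paper as a \emph{conjecture}, and the paper does not prove it. The author only establishes strictly weaker bounds: ${-\s}(\beta)\geq\frac{1}{100}\b(\beta)$ for all positive braids (Theorem~\ref{thm:K>0}), ${-\S}(\beta)\geq\frac{1}{16}\l(\beta)$ for the homogenized signature, and, for $4$-braids, ${-\s}>\frac13\b$ and the asymptotic bound $\frac{5}{12}\l$. So there is no ``paper's own proof'' to compare against, and your proposal should be judged on whether it closes the conjecture on its own. It does not: you correctly dispose of the elementary inequality $\b(\beta)\geq{-\s}(\beta)$ (size of the symmetrized Seifert matrix plus Rudolph's non-positivity), and your reduction of the lower bound to counting eigenvalues $\geq 2$ of the coupling matrix $N$ is a sensible reformulation, but at the decisive step you write that obtaining the sharp constant $\frac12$ ``requires an exact accounting of the positive eigenvalues contributed by the inter-level couplings, with no slack'' and that you ``would expect a complete proof to require a delicate analysis.'' That is not an argument; it is a restatement of the conjecture in the eigenvalue language. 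The entire content of the statement lives in exactly that step, so the proposal has a genuine gap — indeed the gap is the whole theorem.

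Two further cautions. First, your plan to ``bound $n_+(M)+n_0(M)$ by the number of genuine branch configurations'' via interlacing is plausible for producing \emph{some} linear bound $-\s\geq c\,\b$, but nothing in the sketch explains why the resulting $c$ would exceed $\frac12$ rather than, say, $\frac13$; the paper's own careful block-by-block analysis of $4$-braids (cutting $\beta^n$ into length-$4$ blocks, completing most blocks to the half-twist $\Delta$ or to $L$, $R$, and invoking Murasugi's signature formula for $T(4,4j)$) only reaches $\frac{5}{12}$ asymptotically even in that restricted setting, which is evidence that a crude interlacing count will fall short. Second, your overall architecture — a Reduction Lemma to strip rarely-used generators plus a Main Proposition for the densely coupled case — mirrors the paper's structure, but note that the paper's Reduction Lemma degrades the constant from $C$ to $\frac{C(b-1)-1}{b}<C$; any induction on the number of strands of this type cannot preserve the constant $\frac12$, so even granting your Main Proposition for bounded strand number you would not recover the conjectured bound for general positive braids. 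If you want a correct and publishable statement along these lines, prove the weaker linear bound you allude to at the end and present the $\frac12$ statement as the open problem it is.
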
 This would be optimal since there are families of positive braids on which the ratio $\frac{{-\s}}{\b}$ gets arbitrarily close to $\frac{1}{2}$.
For example, positive braids $\beta_n$ that have the $T(n,n+1)$ torus knots as closures.
We provide a linear lower bound as in the conjecture. However, the linear factor is smaller than $\frac{1}{2}.$
\begin{thm}
\label{thm:K>0}
For all positive braids $\beta$, the signature is linearly
bounded from below as follows
${-\s}(\beta)\geq \frac{\b(\beta)}{100}.$
\end{thm}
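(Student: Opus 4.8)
The plan is to work directly with the Seifert form of the Bennequin surface and to turn the signature estimate into a combinatorial problem about a graph built from the braid word. Fix a positive braid word $a_{s_1}\cdots a_{s_\ell}$ on $b$ strands and let $\Sigma$ be its Bennequin surface: $b$ disks joined by $\ell$ twisted bands. Since positive braids are fibered (Stallings), $\Sigma$ realizes the genus, so after a short reduction (destabilizing generators occurring only once, and splitting off split components so that the closure is connected and every $a_i$ occurs at least twice) we have $\b(\beta)=\ell-b+1$. In $H_1(\Sigma)$ take the standard basis of linking cycles: for each generator $a_i$, occurring at positions $p_1<\cdots<p_{m_i}$, the consecutive pairs give $m_i-1$ cycles, for a total of $\sum_i(m_i-1)=\ell-(b-1)=\b(\beta)$. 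In this basis the symmetrized Seifert form is $V+V^{\mathsf T}=-2\,\mathrm{Id}+A$, where $A=A^{\mathsf T}$ has zero diagonal and entries in $\{-1,0,1\}$; concretely $A$ is the signed adjacency matrix of the intersection graph $G$ of these cycles.

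Next I would translate the signature into spectral data of $A$. Writing $n_\pm$ for the number of positive, respectively negative, eigenvalues of $V+V^{\mathsf T}$, one has $-\s(\beta)=n_--n_+\ge 2n_--\b(\beta)$, so it suffices to produce a subspace of dimension a little more than $\tfrac12\b(\beta)$ on which $V+V^{\mathsf T}$ is negative definite. Because of the $-2$ on the diagonal, the restriction of $V+V^{\mathsf T}$ to the span of a subset $T$ of cycles is negative definite exactly when the largest eigenvalue of the induced principal submatrix $A[T]$ is $<2$; for the unsigned model Smith's theorem identifies such induced subgraphs as the disjoint unions of $A$--$D$--$E$ Dynkin diagrams, and in any case a disjoint union of paths always qualifies (its form is conjugate by a diagonal sign matrix to a direct sum of $T(2,\cdot)$ forms). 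Thus Theorem~\ref{thm:K>0} reduces to the statement that $G$ contains an induced disjoint union of paths (more generally ADE diagrams) on at least $\bigl(\tfrac12+\tfrac1{100}\bigr)\b(\beta)$ of its vertices, which is comfortably more than what the bound requires.

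I would then analyze the structure of $G$. Organizing the cycles into columns indexed by the generators $1,\dots,b-1$, the edges of $G$ are of two kinds: vertical edges joining consecutive cycles within a single column (these make each column a path), and horizontal edges joining cycles in adjacent columns $i,i+1$ whose defining position-intervals interleave. A direct count shows that the interior positions of one interval form a contiguous block, so only the two intervals straddling its ends can interleave with it; hence each cycle has at most two horizontal neighbours on each side, $G$ has maximum degree at most $6$, and $G$ is a sparse ``fence''. Deleting every other column removes all horizontal edges and leaves a disjoint union of paths; this already yields a negative definite subspace, and hence $n_-\ge\tfrac12\b(\beta)$ — but only the trivial conclusion $-\s(\beta)\ge 0$.

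The main obstacle, and the real content of the argument, is to beat the threshold $\tfrac12$: I must retain strictly more than half of the cycles in an induced linear forest, equivalently show that the interleaving pattern of the fence never forces one to discard half of the cycles. Here I expect to exploit the fine combinatorial structure of the interleavings — the fact that neighbouring columns communicate only through the two straddling cycles of each interval, which gives the fence a globally path-like shape — in order to keep one full colour class and then greedily adjoin a definite proportion of the remaining cycles while only ever creating $A$-, $D$-, or $E$-type components. The surplus harvested this way is small and governed by a worst-case packing estimate, which is precisely what degrades the conjectural constant $\tfrac12$ to the lossy $\tfrac1{100}$. Controlling this surplus uniformly over every possible interleaving pattern is the step I anticipate to be genuinely delicate, and it is where I would concentrate the technical work of a \emph{Main Proposition}.
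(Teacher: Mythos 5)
Your framework is coherent and genuinely different from the paper's: you work directly with the symmetrized Seifert form of the Bennequin surface in its canonical basis of linking cycles, use $-\s(\beta)\geq 2n_--\b(\beta)$ to reduce the theorem to exhibiting a large negative definite coordinate subspace, and your description of the intersection graph $G$ is correct (each column is a path, only the two intervals straddling the run of interior occurrences can interleave, maximum degree six). Everything up to and including the alternate-column deletion checks out, but as you say that only gives $n_-\geq\frac12\b(\beta)$, i.e.\ $-\s(\beta)\geq 0$, which is essentially Rudolph's theorem. The paper takes a disjoint route: it never analyzes the Seifert form globally, but instead proves the asymptotic bound $-\S(\beta)\geq\frac{5}{12}\l(\beta)$ for $4$-braids by padding length-four blocks up to half-twists $\Delta$ and invoking Murasugi's signature formula for $T(4,4j)$, converts this to $-\s\geq(\frac13+\frac1{75})\b$ for $4$-braids, and then applies a Reduction Lemma (delete generators so that a connected sum of braids on at most four strands remains) whose threshold to beat is $\frac13$ and whose output is exactly the constant $\frac1{100}$.

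The genuine gap is that the entire content of the theorem --- beating your threshold $\frac12$ by a uniform margin --- is deferred to an unproven combinatorial claim: that the fence graph of \emph{every} positive braid contains an induced union of ADE diagrams on at least $\bigl(\frac12+\epsilon\bigr)\b(\beta)$ vertices for a universal $\epsilon>0$. Nothing in your proposal establishes this, and it is not a routine packing estimate. Already for $(a_1a_2)^m$ the graph is a triangulated ladder in which each vertex of one column forms a triangle with two consecutive vertices of the other, so keeping a full column forbids adjoining anything; a triangle-counting bound caps any induced forest at roughly $\frac23$ of the vertices, and finding one above $\frac12+\epsilon$ requires a non-obvious periodic selection rather than a greedy one. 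For general positive braids the interleaving pattern is far less regular, and in addition the components you retain must have spectral radius strictly below $2$ \emph{after} accounting for the signs of the intersection pairing: induced cycles of type $\tilde A_n$ are borderline, and whether they are admissible depends on the sign holonomy around the cycle, a point your sketch does not engage with. Until that combinatorial Main Proposition is stated precisely and proved --- and it is at least as hard as the block-manipulation argument the paper carries out for $4$-braids --- your argument establishes only $-\s(\beta)\geq 0$.
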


We put Theorem~\ref{thm:K>0} in context by recapitulating previously established facts about the first Betti number, the signature, and the slice genus of positive braids.
The first Betti number of positive braids is fully understood since Bennequin's inequality provides the following formula~\cite{Bennequin_Entrelacements}.
\begin{equation}\label{eq:b}
\b(\beta)\;\;=\;\;\l(\beta)-b+c\;\;\text{for every positive braid }\beta,
\end{equation}
where $\l(\beta)$ is the length of $\beta$, $b$ is the number of strands of $\beta$, and $c$ 
equals $1$ plus the number of generators $ a_{i}$ that are not used in a positive braid word for $\beta$. 
For the signature one has that $-\b(L)\leq\s(L)\leq\b(L)$ holds for all links $L$. This is immediate from the definition of the signature; see Section~\ref{sec:sig}.
Rudolph showed that the signature is strictly negative for non-trivial positive braids~\cite{Rudolph_83_PosBraidPosSig}.
Stoimenow provided a monotonically growing function $f\colon\N\to \R_{\geq0}$ of order $n^{\frac{1}{3}}$ such that ${-\s}(\beta)\geq f(\b(\beta))$~\cite{Stoimenow_08}.
By a result of Murasugi, $\frac{|\s|}{2}$ is a lower bound
for the smooth slice genus of knots~\cite{Murasugi_OnACertainNumericalInvariant}. 
Kauffman and Taylor generalized Murasugi's result to the topological slice genus~\cite{KauffmanTaylor_76_SignatureOfLinks},
which is notoriously difficult to determine even for torus knots.
Therefore, Rudolph and Stoimenow's results provide lower bounds for the topological slice genus of knots that are closures of positive braids.
Similarly, Theorem~\ref{thm:K>0} yields the following corollary.
\begin{corollary}
The topological slice genus of positive braid knots
is at least one percent of their genus. In other words, for a knot obtained as the closure of a positive $b$-braid $\beta$, the topological slice genus is at least
$\frac{\l(\beta)-b+1}{200}$.\qed
\end{corollary}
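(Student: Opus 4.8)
The plan is to chain together three facts: the relation between genus and first Betti number, the Kauffman--Taylor bound on the topological slice genus by the signature, and the signature estimate of Theorem~\ref{thm:K>0}. Let $K$ be the closure of a positive $b$-braid $\beta$, and assume $K$ is a knot. First I would invoke the Kauffman--Taylor inequality, by which the topological slice genus $g_4^{\mathrm{top}}(K)$ satisfies $g_4^{\mathrm{top}}(K)\geq\frac{|\s(\beta)|}{2}$. By Theorem~\ref{thm:K>0} we have ${-\s}(\beta)\geq\frac{\b(\beta)}{100}\geq 0$, so in particular $\s(\beta)\leq 0$ and hence $|\s(\beta)|={-\s}(\beta)$; combining these gives $g_4^{\mathrm{top}}(K)\geq\frac{{-\s}(\beta)}{2}\geq\frac{\b(\beta)}{200}$.

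Since the genus of the knot $K$ is $\frac{\b(\beta)}{2}$, the last inequality already says that $g_4^{\mathrm{top}}(K)$ is at least $\frac{1}{100}$ of the genus, which is the first assertion of the corollary.

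For the explicit expression in terms of $\l(\beta)$ and $b$, I would substitute the first Betti number formula \eqref{eq:b}, namely $\b(\beta)=\l(\beta)-b+c$. It remains to argue that $c=1$ under the standing assumption that the closure $K$ is a knot. Recall that $c$ exceeds $1$ precisely when some standard generator $a_i$ is missing from a positive braid word for $\beta$. But if $a_i$ is missing, then strands $1,\dots,i$ never interact with strands $i+1,\dots,b$, so the closure is a split link and therefore disconnected. As a knot is connected, every generator must appear, whence $c=1$ and $\b(\beta)=\l(\beta)-b+1$. Substituting this into $g_4^{\mathrm{top}}(K)\geq\frac{\b(\beta)}{200}$ yields the stated bound $g_4^{\mathrm{top}}(K)\geq\frac{\l(\beta)-b+1}{200}$.

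I expect no real obstacle here: the corollary is a formal consequence of Theorem~\ref{thm:K>0} together with classical results. The only step requiring a word of care is the identification $c=1$, i.e.\ the observation that connectedness of the braid closure forbids unused generators; everything else is direct substitution.
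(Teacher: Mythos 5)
Your proposal is correct and follows exactly the route the paper intends (the corollary is stated with a \qed precisely because it is this direct chain of the Kauffman--Taylor signature bound, Theorem~\ref{thm:K>0}, and the Bennequin formula~\eqref{eq:b} with $c=1$ for knots). The only point needing care, the identification $c=1$ from connectedness of the closure, is handled correctly.
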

In the stronger, smooth setting much more is known.
Indeed, on knots that are closures of positive braids the smooth slice genus 
even agrees with the genus. 
This follows from Rudolph's slice-Bennequin inequality~\cite{rudolph_QPasobstruction}, which is based on the local Thom conjecture as proved by Kronheimer and Mrowka~\cite{KronheimerMrowka_Gaugetheoryforemb}.

We prove Theorem~\ref{thm:K>0} via the study of the asymptotic signature.
Gambaudo and Ghys observed that on the braid group on $b$ strands the signature is a quasi-morphism of defect $b-1$:
for any two $b$-braids $\alpha$ and $\beta$, we have \[|\s(\alpha\beta)-\s(\alpha)-\s(\beta)|\leq b-1;\]
see \cite[Proposition 5.1]{GambaudoGhys_CommutatorsAndDiffeosOfSurfaces}.
Therefore, the homogenization \[\S(\beta)=\lim_{i\to\infty}\frac{\s(\beta^i)}{i},\]
called the \emph{asymptotic signature} of $\beta$, is well-defined.
Noting that
\[\lim_{i\to\infty}\frac{\b(\beta^i)}{i}=\lim_{i\to\infty}\frac{l(\beta^i)}{i}=l(\beta),\]
we see that the above conjecture 
implies the following homogenized analog.
\begin{conjecture}\label{conjHomo:c>0}For all positive braids $\beta$, the homogenization of the signature is linearly bounded as follows
$\l(\beta)\geq{-\S}(\beta)\geq \frac{1}{2}\l(\beta).$
\end{conjecture}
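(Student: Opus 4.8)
The plan is to treat the two inequalities separately, since only the lower bound carries genuine content. \textbf{The upper bound} $\l(\beta)\geq{-\S}(\beta)$ I would obtain essentially for free by homogenizing the universal estimate $\s(L)\geq-\b(L)$ recalled above. Applied to the closure of $\beta^{i}$ it gives $\s(\beta^{i})\geq-\b(\beta^{i})$; dividing by $i$ and letting $i\to\infty$ yields $\S(\beta)\geq-\l(\beta)$, where I use the identity $\lim_{i\to\infty}\b(\beta^{i})/i=\l(\beta)$ noted above. This half uses no positivity and holds for every braid.

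\textbf{The lower bound} $-\S(\beta)\geq\frac{1}{2}\l(\beta)$ is where the difficulty lies. Homogenizing Theorem~\ref{thm:K>0} already yields the unconditional estimate $-\S(\beta)\geq\frac{1}{100}\l(\beta)$, so the real task is to sharpen the constant from $\frac{1}{100}$ to the optimal $\frac{1}{2}$. My first approach would be to study the symmetrized Seifert form $V+V^{T}$ of the fiber surface of the closure of $\beta^{i}$, which for a positive braid is read off combinatorially from its diagram: here $-\s$ is the excess of negative over positive eigenvalues of this $\b(\beta^{i})\times\b(\beta^{i})$ matrix. Thus $-\s\geq\frac{1}{2}\b$ becomes a definite-inertia statement, asking that the form be overwhelmingly negative, with at most a quarter of its $\b(\beta^{i})$ eigenvalues positive. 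A more structured alternative is the Gambaudo--Ghys realization of the homogeneous signature quasi-morphism through the (unitary) Burau representation and the Meyer cocycle, which turns the bound into a question of controlling the Burau contributions along the arc from $1$ to $-1$; the averaging inherent in homogenization is precisely what might make the asymptotic statement more tractable than Conjecture~\ref{conj:c>0} itself.

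\textbf{The main obstacle}, on either route, is to obtain a lower bound on the negative inertia of $V+V^{T}$ (equivalently, an upper bound on the number of positive Burau directions) that is tight enough to reach $\frac{1}{2}$ rather than merely some positive constant. The braids whose closures are the torus knots $T(n,n+1)$ realize the ratio $\frac{1}{2}$ in the limit, so any successful argument must be essentially sharp there; soft convexity or subadditivity estimates on the Seifert form — which already suffice for a bound with a small constant as in Theorem~\ref{thm:K>0} — will not by themselves deliver the factor $\frac{1}{2}$. I expect that a genuinely new combinatorial handle on the positive part of the form is needed, and it is exactly this gap that leaves the statement recorded here as a conjecture.
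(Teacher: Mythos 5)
This statement is labelled a conjecture in the paper, and the paper offers no proof of it: the author only observes that it would follow from Conjecture~\ref{conj:c>0} by homogenization, and then proves the genuinely weaker bound $-\S(\beta)\geq\frac{1}{16}\l(\beta)$ (Theorem~\ref{thm:1/16}, via the Main Proposition and the Reduction Lemma). Your proposal correctly recognizes this: you establish only the upper bound $\l(\beta)\geq-\S(\beta)$ and candidly leave the lower bound open, so there is no claimed proof to find a gap in. Your derivation of the upper bound is correct and is exactly how the paper would obtain it, by homogenizing $\s(L)\geq-\b(L)$ and using $\lim_i\b(\beta^i)/i=\l(\beta)$. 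One small inaccuracy: you assert that this half ``uses no positivity and holds for every braid,'' but the identification of $\lim_i\b(\beta^i)/i$ with the word length $\l(\beta)$ does rely on Bennequin's formula \eqref{eq:b} for positive braids; for a general braid the positive word length is not even defined, and the first Betti number of $\beta^i$ need not grow like $i$ times anything resembling a length. Your discussion of the lower bound---that the $T(n,n+1)$ torus knots force any argument to be sharp, and that soft subadditivity estimates on the Seifert form cannot by themselves reach the factor $\frac{1}{2}$---is consistent with the paper's own position; the paper's best technique (adding generators to reach full twists $\Delta^2$ and exploiting Murasugi's signature formula for $T(4,4j)$) tops out at $\frac{5}{12}$ on four strands and degrades under the Reduction Lemma.
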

We provide such a linear bound, but our factor is $\frac{1}{16}$ rather than $\frac{1}{2}$.
\begin{thm}\label{thm:1/16}
  For every positive braid $\beta$, we have
$-\S(\beta)\geq\frac{1}{16}\l(\beta).$
\end{thm}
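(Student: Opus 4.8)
The plan is to deduce the asymptotic bound from a non-asymptotic one with a bounded error term. Concretely, I would aim to prove that there is a function $g\colon\N\to\R_{\geq0}$ such that every positive braid $\gamma$ on $b$ strands satisfies ${-\s}(\gamma)\geq\frac{1}{16}\l(\gamma)-g(b)$. Granting this, I apply it to $\gamma=\beta^i$: since $\beta^i$ is a positive braid on the same number of strands $b$ with $\l(\beta^i)=i\,\l(\beta)$, I obtain ${-\s}(\beta^i)\geq\frac{i}{16}\l(\beta)-g(b)$. Dividing by $i$ and letting $i\to\infty$, the error $g(b)$ washes out and the definition $\S(\beta)=\lim_i\s(\beta^i)/i$ yields ${-\S}(\beta)\geq\frac{1}{16}\l(\beta)$. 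Thus the whole problem becomes the non-asymptotic statement, with the harmless feature that I am free to lose any additive constant depending only on $b$.

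Next I would pass to the symmetrized Seifert form of the fiber surface $\Sigma$ of $\gamma$. Recall $\Sigma$ is built from $b$ disks and $\l$ positively twisted bands, and a standard basis of $H_1(\Sigma)$ is indexed by the consecutive pairs of equal letters in a positive word: a generator $a_i$ occurring $m_i$ times contributes $m_i-1$ cycles, for a total of $\b(\gamma)=\sum_i(m_i-1)$ basis cycles. In this basis the symmetrized Seifert form is $V+V^{T}=-2\,I+A$, where $A$ is the $\pm1$ adjacency matrix of the \emph{linking graph} $\Gamma$: each generator's cycles form a path (with $+1$ on consecutive pairs), and cycles of adjacent generators $a_i,a_{i+1}$ are joined by an edge exactly when their defining occurrences interleave. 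A short combinatorial check shows each cycle interleaves at most two cycles of $a_{i-1}$ and at most two of $a_{i+1}$, so $\Gamma$ has bounded degree. Since $\s(\gamma)=\mathrm{sign}(V+V^{T})$, this matrix has size $\b(\gamma)$, and $\b(\gamma)=\l(\gamma)-b+c$ differs from $\l(\gamma)$ by at most $b$, the target becomes: the form $-2I+A$ has at least $\frac{1}{16}\l(\gamma)-g(b)$ more negative than positive eigenvalues. Equivalently, I must produce a negative-definite subspace of dimension at least $\tfrac12\b(\gamma)+\tfrac{1}{32}\l(\gamma)$.

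The restriction of $-2I+A$ to the cycles of any single generator is the tridiagonal $T(2,m_i)$ form, which is negative definite. Splitting the generators by parity, the odd-indexed ones pairwise commute and their cycles carry no cross-edges among themselves, so they span a negative-definite subspace; likewise for the even-indexed ones. This already yields a negative-definite subspace of dimension $\tfrac12\b(\gamma)$ for free, and the entire difficulty is to gain the extra $\tfrac{1}{32}\l(\gamma)$ negative directions. My plan for this \emph{Main Proposition} is a margin argument: within each path block most eigenvalues are bounded away from $0$, so I would keep only the ``solidly negative'' eigenvectors of both parity classes and treat the cross-term $B$ between them as a perturbation of bounded operator norm, choosing the spectral cutoff so that the surviving subspace stays negative definite while still retaining strictly more than half of all directions.

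The hard part is precisely this uniform, quantitative control of the cross-links. The estimate must hold simultaneously at both extremes: for words that are nearly a disjoint union of $T(2,m)$'s (where the separated paths make it easy) and for densely interleaved torus-type words such as $(a_1\cdots a_{b-1})^q$ (where the true signature is as negative as possible, $\approx-\tfrac12\b$, and yet every naive rank or independent-set bound collapses because the cross-term has full rank). Bridging these two regimes is what forces the constant down from the conjectural $\tfrac12$ to $\tfrac{1}{16}$, and I expect it to require a genuinely combinatorial input about the interleaving pattern rather than a soft linear-algebra estimate. A preliminary \emph{Reduction Lemma} would first normalize the word---discarding unused generators and absorbing the finitely many ``short'' syllables into the error $g(b)$---so that the Main Proposition only has to treat words in which every generator is used substantially.
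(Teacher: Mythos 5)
Your opening reduction is fine: if ${-\s}(\gamma)\geq\frac{1}{16}\l(\gamma)-g(b)$ held for all positive $b$-braids with $g$ depending only on $b$, then applying it to $\beta^i$ and homogenizing would indeed give the theorem (this is the easy converse of Lemma~\ref{lemma:asymsig>c->sig>cl-(b-1)}). Your description of the symmetrized Seifert form of the fiber surface is also essentially correct, including the bounded degree of the interleaving graph and the fact that each single-generator block is a negative definite $T(2,m_i)$ form. But the proof stops exactly where the theorem begins. The negative definite subspace of dimension $\tfrac12\b(\gamma)$ coming from one parity class only yields ${-\s}\geq 0$, and the entire content of the theorem is the extra $\tfrac{1}{32}\l(\gamma)$ negative directions, which you explicitly defer to a ``margin argument'' that you do not carry out. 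Worse, that argument cannot work as stated: the eigenvalues of the tridiagonal path form $-2I+A_{\mathrm{path}}$ all lie in the interval $(-4,0)$, while the cross-term between the two parity classes is a bounded-degree $\pm1$ matrix whose operator norm is on the order of its degree bound, i.e.\ comparable to $4$. A spectral cutoff that survives a perturbation of that norm would have to discard essentially every eigenvector, and any smaller cutoff retains fewer than half of the directions, which is weaker than the trivial bound. You acknowledge that ``a genuinely combinatorial input'' is needed, but that input is the theorem; as written this is a research plan, not a proof.

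For comparison, the paper takes a different and complete route. The Reduction Lemma is not a normalization of the word but a genuine reduction of strand number: deleting a well-chosen $\frac{1}{b}$-fraction of the generators turns any positive braid into a connected sum of positive braids on at most $b$ strands, at the cost of replacing a constant $C$ valid for $b$-braids by $\tilde{C}=\frac{C(b-1)-1}{b}$. The quantitative engine is then the Main Proposition for $4$-braids, ${-\S}(\beta)\geq\frac{5}{12}\l(\beta)$, proved by cutting $\beta^n$ into length-$4$ blocks, completing all but a controlled fraction of them to the half-twists $L$, $R$, or $\Delta$ by adding two generators each, and observing that $\widetilde{\beta^n}(\widetilde{\beta^n})^{\mathrm{rot}}$ collapses to a large power of the full twist $\Delta^2$, whose signature is known exactly from Murasugi's formula $\s(T(4,4j))=-8j+1$. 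Feeding $C=\frac{5}{12}$, $b=4$ into the Reduction Lemma gives precisely $\tilde{C}=\frac{1}{16}$. If you want to salvage your approach, you would need to replace the perturbation step by something with this kind of exact input on a dense sub-braid; no soft spectral estimate on the interleaving graph will produce the missing $\frac{1}{32}\l$ directions.
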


Next we recall known linear bounds for the signature of non-trivial positive braids with fixed number of strands $b$,
which of course yield analogs for the asymptotic signature.
If $b=2$, then the closure of $\beta$ is the torus link $T(2,\l(\beta))$ and ${-\s}(\beta)=\b(\beta)=\l(\beta)-1$.
For $b=3$, Stoimenow (and by different methods Yoshiaki Uchida) has shown ${-\s}(\beta)> \frac{1}{2}\b(\beta)$ \cite{Stoimenow_08}.
The case $b=4$ is our main concern in this text. 
\begin{mprop}
For every positive 4-braid $\beta$, we have
$-\S(\beta)\geq\frac{5}{12}\l(\beta).$
\end{mprop}

Our interest in the Main Proposition stems from the fact that it implies Theorem~\ref{thm:K>0} and Theorem~\ref{thm:1/16}.
Indeed, Theorem~\ref{thm:1/16} follows immediately from the Main Proposition and the following observation, which we prove at the end of Section~\ref{sec:sig}.
\begin{rlemma}
 Let $C$ be a positive constant such that ${-\s}(\beta)> C\b(\beta)$ (respectively $-\S(\beta)\geq Cl(\beta)$) holds for all non-trivial positive $b$-braids. Then
\[-\s(\beta)>\tilde{C}\b(\beta) \;\;\left(\text{respectively }-\S(\beta)\geq \tilde{C}l(\beta)\right),\where \tilde{C}=\tfrac{C(b-1)-1}{b},\]
holds for all non-trivial positive braids $\beta$.
\end{rlemma}
\begin{Remark}
 The Reduction Lemma remains true when the strict inequalities are replaced by inequalities.
\end{Remark}
The upshot of the Main Proposition is that it provides a bound with a factor that is strictly larger than $\frac{1}{3}$.
If one is only interested in the fact that some linear bound exists for positive $4$-braids,
Stoimenow provided such a bound with the factor $\frac{2}{11}$~\cite[Theorem~4.2]{Stoimenow_08}.
In fact, we even establish
\begin{equation}\label{eq:thirdsigbound}
 {-\s}(\beta)>\frac{1}{3}\b(\beta)\;\;\text{ for all non-trivial positive $4$-braids $\beta$,}
\end{equation}
without relying on the Main Proposition; see Proposition~\ref{prop:thirdsigbound}.
However, these results do not provide linear bounds for general positive braids when combined with the Reduction Lemma.
For example, applying the Reduction Lemma to \eqref{eq:thirdsigbound} just recovers Rudolph's result that
for all non-trivial positive braid $\beta$, one has ${\s}(\beta)<0$.

In Section~\ref{sec:Sigtosig} we use the Main Proposition to prove Theorem~\ref{thm:K>0}. Sections~\ref{sec:SurfacesAndFenceDiagrams} and~\ref{sec:sig} contain generalities on positive braids and their signature,
which are applied to $4$-braids in Section~\ref{sec:4braids} to prove \eqref{eq:thirdsigbound} and the Main Proposition.
All considerations will be restricted to braids with $c=1$ because the signature and the first Betti number are additive on disjoint unions of positive braids. 

We conclude the introduction with some evidence to support our conjectured $\frac{1}{2}\b$-bound. 
For positive $3$-braids, the conjecture holds.
And, for positive $4$-braids up to $17$ crossings, it is also checked; compare Stoimenow's table in~\cite{Stoimenow_08}.
All non-trivial torus links satisfy ${-\s}>\frac{1}{2}\b$, which can be checked using the Gordon-Litherland-Murasugi reduction formulas~\cite[Theorem~5.2]{GLM}.
More generally, the conjecture holds for all algebraic links.
This can be checked using the formula provided by Shinohara; see~\cite{Shinohara_71_OnTheSignatureOfKnotsAndLinks}.
Using Shinohara's formula one can also check 
that the conjecture holds for a lot of other families of positive braid knots that are cables of positive braid knots.
An improved version of the first inequality of~\cite[Theorem~3]{Baader_13_theVolumeOfPositiveBraidLinks}
shows that the conjecture holds for positive braids
that are given by a positive braid word of the form $a_{s_1}^{k_1}\cdots a_{s_r}^{k_r}$ with $k_i\geq2$.

\section*{Acknowledgements} I thank Sebastian Baader for all the fruitful discussions and for providing Figure~\ref{fig:fencediag}.
Thanks also to Livio Liechti for valuable suggestions. Finally, I thank the referee for many suggestions and improvements to the text.
This project is supported by the Swiss National Science Foundation.

\section{From asymptotic signature to signature}\label{sec:Sigtosig}
In this section we provide consequences of the Main Proposition, including a proof of Theorem~\ref{thm:K>0}.
In addition to the Main Proposition, this uses the Reduction Lemma and Proposition~\ref{prop:thirdsigbound},
which are proven in Section~\ref{sec:sig} and Section~\ref{sec:4braids}.

Having a linear bound ${-\s}\geq C\b$ for all positive braids on $b$ or fewer strands
yields a bound $-\S\geq C\l$ for all positive braids on $b$ or fewer strands.
The converse is true up to an additive constant:

\begin{lemma}\label{lemma:asymsig>c->sig>cl-(b-1)}
Let $C$ be a positive constant such that
\[{-\S}(\beta) = \lim_{i\to\infty}\frac{{-\s}(\beta^i)}{i} \geq  C\l(\beta)\]
for all positive $b$-braids $\beta$. Then, for every positive $b$-braid $\beta$, we have
\[{-\s}(\beta) \geq  C\l(\beta)-b+1.\]
\end{lemma}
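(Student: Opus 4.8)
The plan is to exploit the quasi-morphism property of the signature together with the definition of the asymptotic signature as a limit. The key observation is that the hypothesis $-\widetilde{\sigma}(\beta)\geq Cl(\beta)$ is a statement about the limit, while we want a statement about $-\s(\beta)$ itself. So I would first relate $-\s(\beta^i)$ to $i\cdot(-\s(\beta))$ using the defect bound, and then pass to the limit.

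Concretely, recall from the excerpt that for any two $b$-braids the signature satisfies $|\s(\alpha\beta)-\s(\alpha)-\s(\beta)|\leq b-1$. The first step is to apply this inductively to the power $\beta^i=\beta\cdot\beta^{i-1}$. A straightforward induction on $i$ yields
\[
\bigl|\s(\beta^i)-i\,\s(\beta)\bigr|\leq (i-1)(b-1),
\]
since each of the $i-1$ multiplications by $\beta$ contributes at most $b-1$ to the defect. In particular, taking the relevant side of the inequality,
\[
-\s(\beta^i)\;\leq\; -i\,\s(\beta)+(i-1)(b-1).
\]
The second step is to divide by $i$ and take the limit as $i\to\infty$. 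On the left we obtain $-\widetilde{\sigma}(\beta)$ by definition, and by hypothesis this is at least $Cl(\beta)$. On the right, the term $\frac{(i-1)(b-1)}{i}$ tends to $b-1$, so the limit is $-\s(\beta)+(b-1)$. This gives
\[
Cl(\beta)\;\leq\;-\widetilde{\sigma}(\beta)\;\leq\;-\s(\beta)+(b-1),
\]
and rearranging yields exactly the claimed bound $-\s(\beta)\geq Cl(\beta)-b+1$.

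The main point to get right, though it is not really an obstacle, is the direction of the inequalities and the bookkeeping of the defect constant; one must feed the correct side of the absolute-value bound into the limit so that the additive error lands with the right sign. Everything else is routine: the quasi-morphism estimate is quoted, and the limit defining $\widetilde{\sigma}$ is assumed to exist. I would present the inductive defect bound as a displayed inequality, note that $l(\beta^i)=i\,l(\beta)$ so that no issue arises on the length side, and then conclude by dividing and passing to the limit in a single line.
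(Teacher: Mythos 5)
Your argument is correct and is essentially the paper's own proof: the paper simply quotes the fact that $|\s(\beta)-\S(\beta)|\leq b-1$ for a quasi-morphism of defect $b-1$, and your inductive bound $|\s(\beta^i)-i\,\s(\beta)|\leq (i-1)(b-1)$ followed by dividing by $i$ and passing to the limit is exactly the standard derivation of that fact. The bookkeeping of signs and the final rearrangement are all correct.
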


Lemma~\ref{lemma:asymsig>c->sig>cl-(b-1)} is an immediate consequence of the fact that the signature and its homogenization stay close.
That is, for every $\beta$ in $B_b$, we have
\[|\s(\beta)-\S(\beta)| \leq  b-1,\]
which follows from $\s$ being  a quasi-morphism of defect $b-1$.

For positive $4$-braids with non-split closure (i.e.\ $c=1$), applying Lemma~\ref{lemma:asymsig>c->sig>cl-(b-1)} to the Main Proposition and using \eqref{eq:b} yields
${-\s}(\beta)\geq 
\frac{5}{12}(\b(\beta)+3)-3$. Therefore, we get the following affine signature bound for positive $4$-braids.

\begin{corollary}\label{cor:5/12} If $\beta$ is a positive $4$-braid, then ${-\s}(\beta)\geq\frac{5}{12}\b(\beta)-\frac{7}{4}$.\qed
\end{corollary}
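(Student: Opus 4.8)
The plan is to combine the Main Proposition with Lemma~\ref{lemma:asymsig>c->sig>cl-(b-1)} and the Bennequin formula \eqref{eq:b}, disposing first of the non-split case and then reducing the split case to it by additivity. The entire content sits in the Main Proposition; everything else is bookkeeping, so I expect no genuine obstacle.

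First I would treat a positive $4$-braid $\beta$ whose closure is non-split, i.e.\ $c=1$. The Main Proposition gives $-\S(\beta)\geq\frac{5}{12}\l(\beta)$, so I may apply Lemma~\ref{lemma:asymsig>c->sig>cl-(b-1)} with $C=\frac{5}{12}$ and $b=4$ to obtain
\[
-\s(\beta)\;\geq\;\tfrac{5}{12}\l(\beta)-4+1\;=\;\tfrac{5}{12}\l(\beta)-3.
\]
Next I would pass from the length to the first Betti number. Since $b=4$ and $c=1$, formula \eqref{eq:b} reads $\b(\beta)=\l(\beta)-3$, hence $\l(\beta)=\b(\beta)+3$. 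Substituting gives
\[
-\s(\beta)\;\geq\;\tfrac{5}{12}\bigl(\b(\beta)+3\bigr)-3\;=\;\tfrac{5}{12}\b(\beta)+\tfrac{5}{4}-3\;=\;\tfrac{5}{12}\b(\beta)-\tfrac{7}{4},
\]
which is exactly the claimed inequality.

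Finally I would handle the split case $c\geq2$. Here the unused generators split the braid word into blocks supported on disjoint sets of strands, so the closure of $\beta$ is a disjoint union of closures of positive braids on fewer than $4$ strands, possibly together with some unknot components. As both $\s$ and $\b$ are additive under disjoint union, it suffices to bound each piece. For this I would invoke the \emph{constant-free} bounds available on few strands rather than the corollary's affine bound (whose negative constant would otherwise accumulate over pieces): a $2$-braid satisfies $-\s=\b$, a $3$-braid satisfies Stoimenow's $-\s>\tfrac{1}{2}\b$, and an unknot component contributes $-\s=\b=0$; in every case $-\s\geq\frac{5}{12}\b$. Summing over the pieces yields $-\s(\beta)\geq\frac{5}{12}\b(\beta)\geq\frac{5}{12}\b(\beta)-\frac{7}{4}$, completing the split case and hence the proof. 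The only point requiring a little care is precisely this last reduction, namely using the stronger per-piece inequalities so that the additive constant does not degrade; with that observation in place the argument is purely computational.
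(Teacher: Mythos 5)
Your proof is correct and follows essentially the same route as the paper: the non-split case is exactly the paper's one-line derivation (Main Proposition plus Lemma~\ref{lemma:asymsig>c->sig>cl-(b-1)} with $C=\frac{5}{12}$, $b=4$, then $\l(\beta)=\b(\beta)+3$ from \eqref{eq:b}), while the split case is dispatched by the additivity of $\s$ and $\b$ under disjoint union, which the paper relegates to a blanket remark in the introduction. Your explicit treatment of the split pieces via the constant-free bounds on at most $3$ strands is a correct and slightly more careful rendering of that same remark.
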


Corollary~\ref{cor:5/12} can be used to prove the following Proposition.

\begin{prop}\label{prop:siggeqCb}
If $\beta$ is a positive $4$-braid, then \[{-\s}(\beta)\geq (\frac{1}{3}+\frac{1}{75})\b(\beta).\]
\end{prop}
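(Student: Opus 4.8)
The plan is to interpolate between the two bounds for positive $4$-braids already established, namely the affine bound of Corollary~\ref{cor:5/12}, whose slope $\tfrac{5}{12}$ comfortably exceeds the target slope $\tfrac{1}{3}+\tfrac{1}{75}=\tfrac{26}{75}$ but which is weakened by the additive term $-\tfrac{7}{4}$, and the strict linear bound \eqref{eq:thirdsigbound}, which has the smaller slope $\tfrac{1}{3}$ but no additive defect. The first is effective once $\b(\beta)$ is large, the second once $\b(\beta)$ is small, and the content of the proof is that together they cover every value of $\b(\beta)$.

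First I would determine where to switch. Equating $\tfrac{5}{12}\b(\beta)-\tfrac{7}{4}$ with $\tfrac{26}{75}\b(\beta)$ gives $\b(\beta)=25$, so for every positive $4$-braid with $\b(\beta)\geq 25$ Corollary~\ref{cor:5/12} already yields $-\s(\beta)\geq\tfrac{5}{12}\b(\beta)-\tfrac{7}{4}\geq\tfrac{26}{75}\b(\beta)$, as desired.

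It remains to handle $0\leq\b(\beta)\leq 24$. If $\b(\beta)=0$ the closure is an unlink and $\s(\beta)=0$, so the inequality is trivial. For $1\leq\b(\beta)\leq 24$ the braid is non-trivial, so \eqref{eq:thirdsigbound} applies, and here I would exploit the integrality of the signature: since $-\s(\beta)$ is an integer strictly exceeding $\tfrac{1}{3}\b(\beta)$, it is at least $\lfloor\tfrac{1}{3}\b(\beta)\rfloor+1$. One then verifies the finitely many inequalities $\lfloor\tfrac{1}{3}n\rfloor+1\geq\tfrac{26}{75}n$ for $1\leq n\leq 24$; each holds (the tightest being $n=23$, where $8\geq\tfrac{598}{75}$), giving $-\s(\beta)\geq\tfrac{26}{75}\b(\beta)$ in this range as well.

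The computation is largely bookkeeping, and I expect the only delicate point to be the small-$\b$ regime: the conclusion genuinely relies on upgrading the strict inequality \eqref{eq:thirdsigbound} to the integer bound $\lfloor\tfrac{1}{3}\b(\beta)\rfloor+1$, since the bare slope $\tfrac{1}{3}$ would otherwise be useless here. The particular constant $\tfrac{1}{3}+\tfrac{1}{75}$ is then dictated by requiring a single clean value that clears both the crossover at $\b(\beta)=25$ and every individual small case.
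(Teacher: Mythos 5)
Your proposal is correct and takes essentially the same route as the paper: Corollary~\ref{cor:5/12} handles $\b(\beta)\geq 25$ (with the same crossover computation), and the integrality upgrade of the strict bound \eqref{eq:thirdsigbound} handles the remaining cases. The paper merely streamlines your small-$\b$ step by writing the strict inequality uniformly as ${-\s}(\beta)\geq\frac{1}{3}\b(\beta)+\frac{1}{3}$ and observing that $\frac{1}{3}\geq\frac{1}{75}\b(\beta)$ for $\b(\beta)\leq 25$, which avoids your finite case-by-case verification of $\lfloor n/3\rfloor+1\geq\frac{26}{75}n$.
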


In turn, Proposition~\ref{prop:siggeqCb} implies Theorem~\ref{thm:K>0} by the Reduction Lemma
since we have \[\tilde{C}=\frac{C(4-1)-1}{4}=\frac{1}{100}\for C=\frac{1}{3}+\frac{1}{75}.\]

\begin{proof}[Proof of Proposition~\ref{prop:siggeqCb}]
Set $C=\frac{1}{3}+\frac{1}{75}$. 
Corollary~\ref{cor:5/12} can be written as
\begin{equation}\label{eq:geqC+5/12-C}
{-\s}(\beta)\geq\frac{5}{12}\b(\beta)-\frac{7}{4} = C\b(\beta)+(\frac{5}{12}-C)\b(\beta)-\frac{7}{4}
\end{equation}
for all positive $4$-braids $\beta$.
The constant $C$ is chosen such that \eqref{eq:geqC+5/12-C} yields
 ${-\s}(\beta)\geq C\b(\beta)$ whenever $\b(\beta)\geq 25$.
On the other hand, we have ${-\s}(\beta)> \frac{\b(\beta)}{3}$ for all non-trivial positive $4$-braids; see Proposition~\ref{prop:thirdsigbound},
which can be written as ${-\s}(\beta)\geq \frac{\b(\beta)}{3}+\frac{1}{3}$. 
In particular,
\[
{-\s}(\beta) \geq  
\frac{\b(\beta)}{3}+\frac{1}{75}\b(\beta)
\]
for all positive $4$-braids with $\b\leq 25$.
\end{proof}

If one were able to strengthen Corollary~\ref{cor:5/12} to a linear bound for the signature with factor $\frac{5}{12}$ or even $\frac{1}{2}$,
then Theorem~\ref{thm:K>0} would follow immediately from the Reduction Lemma with factor $\frac{1}{16}$ or $\frac{1}{8}$, respectively, rather than $\frac{1}{100}$.

Applying Lemma~\ref{lemma:asymsig>c->sig>cl-(b-1)} to Theorem~\ref{thm:1/16} yields
the following affine linear bound.

\begin{corollary}\label{cor:1/16} 
For every positive braid $\beta$ on at most $b$ strands, we have \[\pushQED{\qed}{-\s}(\beta) \geq \frac{1}{16}\b(\beta)-\frac{15}{16}(b-1).\qedhere\popQED\]
\end{corollary}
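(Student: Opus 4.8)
The plan is to feed Theorem~\ref{thm:1/16} into Lemma~\ref{lemma:asymsig>c->sig>cl-(b-1)} and then trade the length $\l(\beta)$ for the first Betti number $\b(\beta)$ using the Bennequin formula~\eqref{eq:b}. Concretely, Theorem~\ref{thm:1/16} asserts $-\S(\beta)\geq\frac{1}{16}\l(\beta)$ for \emph{every} positive braid, so in particular the hypothesis of Lemma~\ref{lemma:asymsig>c->sig>cl-(b-1)} holds with $C=\frac{1}{16}$ for every positive $b$-braid, and likewise for every positive $b'$-braid with $b'\leq b$. First I would apply that lemma to obtain
\[
 -\s(\beta)\geq\tfrac{1}{16}\l(\beta)-(b-1)
\]
for every positive $b$-braid $\beta$.

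The second step is the substitution $\l(\beta)=\b(\beta)+b-c$ coming from~\eqref{eq:b}. As noted in the introduction, it suffices to treat the non-split case $c=1$, since $\s$ and $\b$ are additive on disjoint unions of positive braids. In the case $c=1$ the substitution reads $\l(\beta)=\b(\beta)+b-1$, and plugging this in gives
\[
 -\s(\beta)\geq\tfrac{1}{16}\bigl(\b(\beta)+b-1\bigr)-(b-1)=\tfrac{1}{16}\b(\beta)-\tfrac{15}{16}(b-1),
\]
which is exactly the claimed inequality. Note that the constant $\frac{15}{16}$ appears precisely because the gain $\frac{1}{16}(b-1)$ from converting $\l$ to $\b$ in the $c=1$ case partially offsets the quasimorphism defect $(b-1)$ supplied by the lemma.

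It remains to address the phrase "at most $b$ strands" together with the split case, and this is the only point that calls for care rather than pure arithmetic. If $\beta$ is non-split on $b'\leq b$ strands, I would run the computation above with $b$ replaced by $b'$, obtaining $-\s(\beta)\geq\frac{1}{16}\b(\beta)-\frac{15}{16}(b'-1)$; since $b'-1\leq b-1$ the error term only improves and the stated bound follows. For a split braid I would write its closure as a disjoint union of non-split pieces on $b_1,\dots,b_r$ strands (isolated strands contributing $0$ to both $\s$ and $\b$), with $\sum_i b_i\leq b$ and each piece satisfying $c=1$; summing the per-piece estimates and using $\sum_i (b_i-1)\leq b-1$ again preserves, indeed strengthens, the bound. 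I expect no genuine obstacle here: Theorem~\ref{thm:1/16} carries all the analytic content, and this corollary is a bookkeeping consequence whose only substantive check is that the additive constant never exceeds $\frac{15}{16}(b-1)$ as the strand count and the number of split components vary.
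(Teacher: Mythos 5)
Your proposal is correct and follows the paper's own route: feed Theorem~\ref{thm:1/16} into Lemma~\ref{lemma:asymsig>c->sig>cl-(b-1)} and convert $\l$ to $\b$ via~\eqref{eq:b} in the $c=1$ case, the paper having already reduced to $c=1$ by additivity. Your explicit bookkeeping for split braids and for strand counts $b'\leq b$ is exactly the check the paper leaves implicit, and it goes through.
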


Corollary~\ref{cor:1/16} can also be proved by applying a slight modification of the proof of the Reduction Lemma to Corollary~\ref{cor:5/12}.
With this, the constant $\frac{15}{16}(b-1)$ is improved to be $\frac{7}{4}\lfloor\frac{b}{4}\rfloor$. 
Corollary~\ref{cor:1/16} shows that 
the topological slice genus of positive braid knots 
grows asymptotically at least as fast as $\frac{1}{16}$ of the genus.

\section{Minimal Seifert surfaces and fence diagrams}\label{sec:SurfacesAndFenceDiagrams}
In this section we recall how to switch between three ways of viewing positive braids;
namely, by their braid diagrams, by their associated minimal Seifert surface,
and by their fence diagrams.
In this article, a \emph{Seifert surface} for a link $L$ in $\R^3$
is a possibly non-connected, oriented, embedded surface with oriented boundary $L$.
Seifert surfaces that minimize the first Betti number are called {minimal Seifert surfaces}.

When drawing braid diagrams for a braid $ a_{i_1} a_{i_2}\cdots a_{i_l}$, we start with the leftmost generator, then draw the second generator on top, and so one.
For example, the positive 3-braid $ a_1 a_2$ is represented by $~\xygraph{
!{0;/r0.7pc/:}
[u(2.5)]
[d(1.4)]!{\vcrossneg}
[ul]!{\xcapv[1]@(0)} [ld]
[ur]!{\vcrossneg}
[urr]!{\xcapv[1]@(0)} [ld]
}\hspace{-0.5pc}$, where the generators $a_1$ and $a_2$ are represented by $~\xygraph{
!{0;/r0.7pc/:}
[u(2)]
[d(1.5)l]!{\vcrossneg}
[urr]!{\xcapv[1]@(0)} [ld]
}\hspace{-0.5pc}$
and
$~\xygraph{
!{0;/r0.7pc/:}
[u(2)]
[d(1.5)]!{\vcrossneg}
[ul]!{\xcapv[1]@(0)} [ld]
}~$, respectively.
We orient all strands of braids in the same direction, say upwards, and closures of braids are oriented accordingly.

Closures of positive braids have a unique minimal Seifert surface.
Indeed, non-split (i.e.\ $c=1$) positive braids links are fibered~\cite{Stallings_78_ConstructionsOfFibredKnotsandLinks} and, therefore, the fiber surface is the unique minimal Seifert surface.
If a positive braid link has $c>1$,
then it also has a unique minimal Seifert surface,
which is given as the disjoint union of minimal Seifert surfaces of the $c$ pieces.
We note that for a positive braid $\beta$,
$c$ is equal to the number of components of the closure of $\beta$ that can be separated by spheres.
This follows from the fact that the closure of a positive braid word in which all generators appear are fibered and, therefore, cannot be separated by a sphere.

Since in a braid diagram associated with a positive braid word all crossings are positive,
replacing every crossing with a horizontal line still allows one to describe positive braids.
This yields \emph{fence diagrams} of positive braids as depicted in Figure~\ref{fig:fencediag}; compare Rudolph~\cite{Rudolph_98_QuasipositivePlumbing}.
A nice feature of fence diagrams is that the fence diagram of a positive braid $\beta$,
seen as a graph in $\R^3$,
is a deformation retract of the minimal Seifert surface of the closure of $\beta$.
\begin{figure}[h]
\begin{center}
\psfrag{=}{$~$}
\includegraphics[width=.4\textwidth]{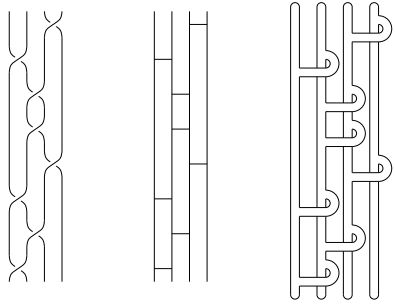}
\caption{The braid diagram, the fence diagram, and the unique associated Seifert surface for the positive 4-braid word $ a_1 a_2 a_1 a_3 a_2 a_2 a_1 a_3 $.}
\label{fig:fencediag}
\end{center}
\end{figure}
In particular, the first Betti number of a positive braid is equal to the first Betti number of the corresponding fence diagram.
We feel that all the above becomes very clear by considering an example.
Figure~\ref{fig:fencediag} provides the braid diagram, 
the fence diagram, and the minimal Seifert surface of the closure, for one positive 4-braid word.
In what follows we depict positive braids using fence diagrams.

As there are several positive braid words for most positive braids, there are also several fence diagrams.
Two fence diagrams for the same positive braid are related by moves corresponding to the braid relations.
The relation $ a_i a_j= a_j a_i$ for $|i-j|\geq 2$ is incorporated by looking at fence diagrams up to planar isotopy, e.g.\
$\xygraph{
!{0;/r0.7pc/:}
[u(0.75)]!{\xcapv[1.5]@(0)}
[lu]!{\xcapv[1.5]@(0)}
[lu]!{\xcapv[1.5]@(0)}
[lu]!{\xcapv[1.5]@(0)}
[u(0.5)rr] !{\xcaph[1]@(0)}
[d(0.5)lll] !{\xcaph[1]@(0)}
}\hspace{-0.75pc}=~\xygraph{
!{0;/r0.7pc/:}
[u(0.75)]!{\xcapv[1.5]@(0)}
[lu]!{\xcapv[1.5]@(0)}
[lu]!{\xcapv[1.5]@(0)}
[lu]!{\xcapv[1.5]@(0)}
[u(0.5)] !{\xcaph[1]@(0)}
[d(0.5)r] !{\xcaph[1]@(0)}
}\hspace{-0.75pc}=~\xygraph{
!{0;/r0.7pc/:}
[u(0.5)]!{\xcapv[1]@(0)}
[lu]!{\xcapv[1]@(0)}
[lu]!{\xcapv[1]@(0)}
[lu]!{\xcapv[1]@(0)}
[u(0.5)]!{\xcaph[1]@(0)}
[r] !{\xcaph[1]@(0)}}
\hspace{-0.25pc}$.
The braid relation $ a_i a_j a_i= a_j a_i a_j$ for $|i-j|=1$ corresponds to the move
$~\xygraph{
!{0;/r0.7pc/:}
[u(1)]!{\xcapv[2]@(0)}
[lu]!{\xcapv[2]@(0)}
[lu]!{\xcapv[2]@(0)}
[u(0.5)r] !{\xcaph[1]@(0)}
[d(0.5)ll] !{\xcaph[1]@(0)}
[d(0.5)] !{\xcaph[1]@(0)}
}\hspace{-1pc}=~\xygraph{
!{0;/r0.7pc/:}
[u(1)]!{\xcapv[2]@(0)}
[lu]!{\xcapv[2]@(0)}
[lu]!{\xcapv[2]@(0)}
[u(0.5)] !{\xcaph[1]@(0)}
[d(0.5)] !{\xcaph[1]@(0)}
[d(0.5)ll] !{\xcaph[1]@(0)}
}\hspace{-1pc}.$

\section{Signature of links and braids}\label{sec:sig} In this section we recall the definition and properties of the signature. Also, we prove the Reduction Lemma.

The signature of a link $L$ in $\R^3$, denoted by $\s(L)$, is defined as follows.
Choose any Seifert surface $F$ for $L$ and define the \emph{Seifert form}\textemdash
a bilinear form $S\colon H_1(F)\times H_1(F)\to \Z$ on the first integer homology group of $F$\textemdash
as follows: represent classes $a,b$ in $H_1(F)$ by immersed curves $\gamma,\delta$ in $F$
and let $\delta^+$ be the curve obtained by moving $\delta$ a small amount along the outward pointing normal vector field of $F$.
Then $S(a,b)$ is set to be the linking number of $\gamma$ and $\delta^+$,
where the linking number of non-embedded disjoint curves in $\R^3$ is defined
by taking embedded perturbations contained in disjoint neighborhoods of the original curves.
Writing $S$ in a basis for $H_1(F)$ yields a \emph{Seifert matrix} $A$ for $L$.
The number of positive eigenvalues minus the number of negative eigenvalues of the symmetrization of $A$ 
yields a link invariant, the \emph{signature}; see~\cite{Trotter_62_HomologywithApptoKnotTheory} and~\cite{Murasugi_OnACertainNumericalInvariant}.
There are different sign conventions for the signature in the literature.
We choose sign conventions (for example in the definition of the linking number) such that positive braids have negative signature. In particular,
$\s(a_1^n)=-n+1$ (rather than $n-1$) for all positive integers $n$.

The signature is additive on disjoint and connected sums of links.
Note that the connected sum of links with more than one component is only well-defined if one specifies which components to connect.
However, all possible resulting links have the same signature.

\begin{lemma}\label{lemma:sigandunion}
If a link $L$ is a disjoint or a connected sum of links $L_1, \cdots, L_k$, then $\s(L)=\sum_{i=1}^{k}\s(L_i)$.
\end{lemma}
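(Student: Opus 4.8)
The plan is to reduce to the case $k=2$ by induction and, in both the disjoint and the connected setting, to exhibit a Seifert surface for $L$ that is assembled from Seifert surfaces $F_1,F_2$ of the two pieces in such a way that the associated Seifert matrix becomes block diagonal. Since the signature of a symmetric matrix is the sum of the signatures of its diagonal blocks, additivity of $\s$ will follow at once.

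For the disjoint union $L=L_1\sqcup L_2$, I would choose Seifert surfaces $F_1$ and $F_2$ lying in disjoint balls $B_1$ and $B_2$ in $\R^3$, so that $F=F_1\sqcup F_2$ is a Seifert surface for $L$. Then $H_1(F)=H_1(F_1)\oplus H_1(F_2)$, and one may take a basis consisting of curves supported in $B_1$ together with curves supported in $B_2$. Any two curves lying in different balls have linking number zero, and the push-off $\delta^+$ stays inside the ball containing $\delta$; hence the Seifert form vanishes between the two summands. Writing the Seifert form in this basis produces a block diagonal matrix $A=A_1\oplus A_2$ whose blocks are Seifert matrices for $L_1$ and $L_2$, and the symmetrization is block diagonal as well.

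For the connected sum $L=L_1\# L_2$, I would form the boundary connected sum $F=F_1\natural F_2$ by joining $F_1$ and $F_2$ along a band realizing the identification of the chosen boundary components; placing $F_1$ and $F_2$ in disjoint balls and letting the band be the only part of $F$ that leaves these balls yields a Seifert surface for $L$. The key point is that attaching a single band between boundary arcs of the two surfaces does not create new one-cycles, so $H_1(F)=H_1(F_1)\oplus H_1(F_2)$, and every homology class is again represented by a curve that can be isotoped into $B_1$ or $B_2$ away from the band. As in the disjoint case, curves confined to different balls have vanishing mutual Seifert pairing, so the Seifert matrix is once more block diagonal.

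The step to verify carefully is the homology splitting together with the vanishing of the off-diagonal Seifert pairing in the connected-sum case: one must check that the connecting band contributes no generator to $H_1(F)$ (a Mayer--Vietoris computation, or the observation that $F$ deformation retracts onto $F_1\vee F_2$), and that representatives of the homology classes, together with their normal push-offs, can be chosen disjoint from the band and confined to the separate balls. Granting this, the block diagonal structure gives $\s(L)=\s(L_1)+\s(L_2)$ for $k=2$, and induction on $k$ completes the proof. As a byproduct, since the right-hand side is independent of which components are used to form the connected sum, this also justifies the earlier remark that all connected sums of a given collection of links share the same signature.
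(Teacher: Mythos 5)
Your proof is correct and follows the same route as the paper, which simply notes that the direct sum of Seifert matrices $A_i$ for the $L_i$ is a Seifert matrix for $L$; your write-up just supplies the geometric details (disjoint balls, boundary connected sum, block-diagonality of the symmetrized matrix) behind that one-line justification. Nothing further is needed.
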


Lemma~\ref{lemma:sigandunion} follows from the fact that the direct sum of Seifert matrices $A_i$ for $L_i$ provides a Seifert matrix $A=\oplus_{i=1}^k A_i$ for $L$.

The fact that deleting the first row and the first column of a symmetric matrix changes its signature by at most $\pm1$ yields the following lemma; see~\cite{tristram}.
\begin{lemma}\label{lemma:bandcutting}
 If a Seifert surface $F$ for a link $L$ is obtained from a Seifert surface $F'$ for a link $L'$ by adding or deleting a 1-handle, then \[\pushQED{\qed} \s(L')-1\leq\s(L)\leq\s(L')+1.\qedhere\popQED\]
\end{lemma}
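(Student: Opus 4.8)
The plan is to reduce the statement to a linear-algebra fact about bordering symmetric matrices, exactly as the remark preceding the lemma suggests. By the symmetry of the asserted inequality it suffices to treat the case where $F$ is obtained from $F'$ by adding a single $1$-handle; the deletion case is then the same statement read backwards. Such a handle changes $b_1(F')$ by $0$ or by $1$. In the first case the handle merely joins two components of $F'$ by a band: this creates no new homology, the inclusion induces an isomorphism $H_1(F')\cong H_1(F)$, the Seifert forms agree, and the inequality holds with equality. So from now on I would assume $b_1$ increases by one, so that $H_1(F')$ sits inside $H_1(F)$ as a codimension-one subgroup, complemented by a single class $\gamma_0$ represented by a loop running once over the handle.

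The key geometric observation I would make next is that the Seifert form of $F$, restricted to $H_1(F')$, coincides with the Seifert form of $F'$. Indeed, for classes $a,b$ carried by curves in the subsurface $F'\subset F$, the value $S(a,b)=\mathrm{lk}(\gamma,\delta^{+})$ is a linking number computed in $\R^3$, and the outward normal of $F$ along $F'$ agrees with the outward normal of $F'$; hence the positive push-off $\delta^{+}$ is the same whether it is taken on $F'$ or on $F$. Choosing a basis of $H_1(F)$ consisting of a basis of $H_1(F')$ together with $\gamma_0$, I would conclude that a Seifert matrix $A$ for $L$ has the shape of a Seifert matrix $A'$ for $L'$ bordered by the single row and column that record the linking numbers involving $\gamma_0$. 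Symmetrizing, $A'+A'^{T}$ is precisely the principal submatrix of $A+A^{T}$ obtained by deleting the row and column indexed by $\gamma_0$.

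The final step invokes the cited fact (Cauchy interlacing) that deleting a row and the corresponding column of a symmetric matrix changes its signature by at most $1$: the eigenvalues of the deleted matrix interlace those of the full matrix, so the number of positive eigenvalues and the number of negative eigenvalues each change by $0$ or $1$, whence the signature changes by at most $\pm1$. Since $\s(L)$ and $\s(L')$ are by definition the signatures of $A+A^{T}$ and $A'+A'^{T}$, this yields $\s(L')-1\le\s(L)\le\s(L')+1$, as claimed.

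I expect the only real subtlety\textemdash and thus the step to state carefully\textemdash to be the identification of $A'+A'^{T}$ as a principal submatrix of $A+A^{T}$: one must check both that $H_1(F')$ is genuinely complemented by a single new generator after the handle addition and that the restricted Seifert form is truly unaltered. Everything else is either the standard interlacing inequality or the symmetry reduction between the add-handle and delete-handle cases.
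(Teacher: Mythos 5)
Your proof is correct and takes essentially the same approach the paper intends: the lemma is stated with no separate proof precisely because it is meant to follow from the observation, made in the sentence preceding it and attributed to Tristram, that deleting a row and the corresponding column of a symmetric matrix changes its signature by at most $\pm1$. Your write-up is a careful elaboration of exactly that reduction (including the easy case where the handle joins two components and $b_1$ is unchanged), so there is nothing to add.
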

 Let us now discuss properties of the signature of braids.
Firstly, if we permute braids cyclically, then they have the same closure and thus the same signature:
let $\beta= a_{i_1}^{\epsilon_{1}} a_{i_2}^{\epsilon_{2}}\cdots a_{i_l}^{\epsilon_{l}}$ be a braid in some $B_b$, then
$ a_{i_2}^{\epsilon_{2}}\cdots a_{i_l}^{\epsilon_{l}} a_{i_1}^{\epsilon_{1}}$ has the same closure and thus the same signature as $\beta$.
If we add or delete a generator in a braid word, then the signature of the corresponding braid changes by at most $\pm1$. This is a consequence of Lemma~\ref{lemma:bandcutting}.
As mentioned in the introduction, $\s$ is a quasi-morphism on the $b$-strand braid group. 

\begin{lemma}\label{lemma:SigIsaQM}
 For any two $b$-braids $\alpha,\beta$, we have $|\s(\alpha\beta)-\s(\alpha)-\s(\beta)|\leq b-1$.
\end{lemma}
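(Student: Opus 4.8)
## Proof Plan for Lemma~\ref{lemma:SigIsaQM}

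The plan is to establish the quasi-morphism bound by comparing the signatures of the links obtained as closures of $\alpha\beta$, $\alpha$, and $\beta$ through their Seifert surfaces built from fence diagrams. The key observation is that the fence diagram (equivalently, the minimal Seifert surface) for the product $\alpha\beta$ can be obtained from the disjoint union of the fence diagrams for $\alpha$ and for $\beta$ by a controlled number of handle additions, and each such handle changes the signature by at most $\pm 1$ via Lemma~\ref{lemma:bandcutting}.

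First I would set up the geometry. Writing $\alpha$ and $\beta$ as positive braid words on $b$ strands, their closures $\hat\alpha$ and $\hat\beta$ bound fence-diagram Seifert surfaces $F_\alpha$ and $F_\beta$. I would consider the Seifert surface $F_{\alpha}\sqcup F_{\beta}$ for the split link $\hat\alpha\sqcup\hat\beta$, whose signature is exactly $\s(\alpha)+\s(\beta)$ by Lemma~\ref{lemma:sigandunion}. The goal is then to relate this disjoint union to a Seifert surface $F_{\alpha\beta}$ for the closure of the product. Stacking the braid word for $\alpha$ on top of the braid word for $\beta$ and taking the closure produces $\widehat{\alpha\beta}$; geometrically, passing from the two separately-closed surfaces to the single surface for the product amounts to cutting the $b$ closure arcs of each factor and re-gluing the top of $\beta$ to the bottom of $\alpha$ along the $b$ strands. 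Each such cut-and-reglue operation on a single strand is a 1-handle modification of the surface.

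The heart of the argument is the bookkeeping: since the braids have $b$ strands, joining the closure of $\alpha$ to the closure of $\beta$ to form the closure of $\alpha\beta$ requires exactly $b$ strand-gluings, but one of these is "free" in the sense that the underlying graphs (fence diagrams) are already connected enough that only $b-1$ independent 1-handles are needed to pass between the two surfaces (up to homeomorphism of the resulting abstract surface). More precisely, I would count first Betti numbers: using \eqref{eq:b}, one has $\b(\alpha\beta)=\b(\alpha)+\b(\beta)+(b-1)$ in the generic connected case, so the surface for the product differs from the disjoint union by precisely $b-1$ handles. Applying Lemma~\ref{lemma:bandcutting} iteratively across these $b-1$ handle additions yields $|\s(\alpha\beta)-(\s(\alpha)+\s(\beta))|\leq b-1$, which is the claimed inequality.

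The main obstacle will be handling the bookkeeping cleanly in all cases, particularly when the closures of $\alpha$, $\beta$, or $\alpha\beta$ are split (i.e.\ when $c>1$ and not every generator appears), since then the fence diagrams may be disconnected and the naive Betti-number count must be adjusted. I would address this by reducing to the case where all relevant surfaces are connected—for instance, by first noting the bound is insensitive to adding the missing generators in a controlled way, or by observing that the disjoint-union additivity of Lemma~\ref{lemma:sigandunion} lets one treat split components independently and then recombine. Alternatively, the cleanest route may be to invoke Gambaudo and Ghys's Proposition~5.1 from~\cite{GambaudoGhys_CommutatorsAndDiffeosOfSurfaces} directly, since the statement is precisely their result restated; the handle-counting sketch above then serves as the conceptual justification for why the defect equals $b-1$ and not something larger.
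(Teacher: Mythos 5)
Your proposal follows essentially the same route as the paper: the paper's entire justification is that a Seifert surface for $\alpha\beta$ is obtained from the connected sum of Seifert surfaces for $\alpha$ and $\beta$ by adding $b-1$ 1-handles (deferring to the proof of Proposition~5.1 of Gambaudo--Ghys), and then Lemma~\ref{lemma:bandcutting} is applied $b-1$ times --- exactly your handle-counting argument, including the observation that one of the $b$ strand-gluings is ``free'' because it realizes a connected sum, which leaves the signature sum unchanged by Lemma~\ref{lemma:sigandunion}. One caveat: the lemma concerns \emph{arbitrary} $b$-braids, so fence diagrams, minimal Seifert surfaces of positive braid closures, and the Betti-number formula~\eqref{eq:b} are not available; you should instead work with the Seifert-algorithm surface of an arbitrary braid-word diagram ($b$ disks joined by one band per crossing), for which the same disk-and-band bookkeeping goes through verbatim.
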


Using the fact that cyclic permutations of a braid have the same signature, we can state this as follows.

\begin{corollary}\label{cor:SigIsaQM}
 For $b$-braids $\alpha,\beta,\gamma$, we have \[\pushQED{\qed}|\s(\alpha\gamma\beta)-\s(\alpha\beta)-\s(\gamma)|\leq b-1.\qedhere\popQED\]
\end{corollary}

Lemma~\ref{lemma:SigIsaQM} is based on Lemma~\ref{lemma:bandcutting}
and the fact that there is a Seifert surface for $\alpha\beta$ that can be obtained from the connected sum of Seifert surfaces for $\alpha$ and $\beta$ by adding $b-1$ 1-handles;
see Gambaudo and Ghys for a proof~\cite[Proof of Proposition 5.1]{GambaudoGhys_CommutatorsAndDiffeosOfSurfaces}.

\begin{Remark}\label{rem:SigIsaQM}
 If $\alpha$ or $\beta$ can be written as a braid without one or several generators $ a_i$,
 then the statement of Lemma~\ref{lemma:SigIsaQM} is true with defect strictly smaller than $b-1$. Namely,
\[|\s(\alpha\beta)-\s(\alpha)-\s(\beta)|\leq b-\max\{c(\alpha),c(\beta)\},\]
where $c(\alpha)$ and $c(\beta)$ denote 1 plus the number of generators that are not needed in a braid word for $\alpha$ and $\beta$, respectively.
In particular, if $\alpha,\beta\in B_b$, then \[\s(\alpha\beta)+\s( a_i^n)-1\leq\s(\alpha a_i^n\beta)\leq\s(\alpha\beta)+\s( a_i^n)+1,\]
for all integers $n$ and all generators $a_i$.
\end{Remark}

Before using the above on positive $4$-braids to prove the Main Proposition, 
we prove the Reduction Lemma.

\begin{proof}[Proof of the Reduction Lemma]

We prove the statement only for $\s$; the proof for $\S$ is similar. Let $\beta$ be a non-trivial positive braid word in some braid group $B_n$.
Without loss of generality, we assume that every generator $a_{i}$ with $1\leq i\leq n-1$ is contained in $\beta$ at least once (i.e.~$c=1$).

The idea of the proof is to delete generators in $\beta$ such that a connected sum of braids on $b$ or fewer strands remains.

 For $i$ in $\{1,2,\ldots,b\}$, we denote by $\beta(i)$ the braid obtained from $\beta$ by deleting all but one (say the leftmost) $a_k$ for all $k$ in $\{i,i+b,i+2b,i+3b,\ldots \}$.
Figure~\ref{fig:betaifrombeta} illustrates how $\beta(i)$ is obtained from $\beta$.
\begin{figure}[h]
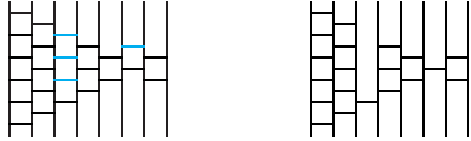

\centering
$~\xygraph{
!{0;/r0.7pc/:}
[u(3)]!{\xcapv[6]@(0)}
[lu]!{\xcapv[6]@(0)}
[lu]!{\xcapv[6]@(0)}
[lu]!{\xcapv[6]@(0)}
[lu]!{\xcapv[6]@(0)}
[lu]!{\xcapv[6]@(0)}
[lu]!{\xcapv[6]@(0)}
[lu]!{\xcapv[6]@(0)}
[u(0.5)] !{\xcaph[1]@(0)}
[d(0.5)] !{\xcaph[1]@(0)}
[d(0.5)] !{\color{cyan}\xcaph[1]@(0)}
[d(0.5)] !{\color{black}\xcaph[1]@(0)}
[d(0.5)] !{\xcaph[1]@(0)}
[d(0.5)] !{\xcaph[1]@(0)}
[u(1)l] !{\color{cyan}\xcaph[1]@(0)}
[d(0.5)] !{\color{black}\xcaph[1]@(0)}
[dl] !{\xcaph[1]@(0)}
[u(2)lllllll] !{\xcaph[1]@(0)}
[d(0.5)] !{\xcaph[1]@(0)}
[d(0.5)] !{\color{cyan}\xcaph[1]@(0)}
[d(0.5)] !{\color{black}\xcaph[1]@(0)}
[d(0.5)] !{\xcaph[1]@(0)}
[u(1)lllll] !{\xcaph[1]@(0)}
[d(0.5)] !{\xcaph[1]@(0)}
[d(0.5)] !{\color{cyan}\xcaph[1]@(0)}
[d(0.5)] !{\color{black}\xcaph[1]@(0)}
[u(0.5)llll] !{\xcaph[1]@(0)}
[d(0.5)] !{\xcaph[1]@(0)}
[d(0.5)] !{\xcaph[1]@(0)}
[lll] !{\xcaph[1]@(0)}
[d(0.5)] !{\xcaph[1]@(0)}
[d(0.5)ll] !{\color{black}\xcaph[1]@(0)}
}
\hspace{10cm}
~\xygraph{
!{0;/r0.7pc/:}
[u(3)]!{\xcapv[6]@(0)}
[lu]!{\xcapv[6]@(0)}
[lu]!{\xcapv[6]@(0)}
[lu]!{\xcapv[6]@(0)}
[lu]!{\xcapv[6]@(0)}
[lu]!{\xcapv[6]@(0)}
[lu]!{\xcapv[6]@(0)}
[lu]!{\xcapv[6]@(0)}
[u(0.5)] !{\xcaph[1]@(0)}
[d(0.5)] !{\xcaph[1]@(0)}
[dr] !{\xcaph[1]@(0)}
[d(0.5)] !{\xcaph[1]@(0)}
[d(0.5)] !{\xcaph[1]@(0)}
[u(0.5)] !{\xcaph[1]@(0)}
[dl] !{\xcaph[1]@(0)}
[u(2)lllllll] !{\xcaph[1]@(0)}
[d(0.5)] !{\xcaph[1]@(0)}
[dr] !{\xcaph[1]@(0)}
[d(0.5)] !{\xcaph[1]@(0)}
[u(1)lllll] !{\xcaph[1]@(0)}
[d(0.5)] !{\xcaph[1]@(0)}
[dr] !{\xcaph[1]@(0)}
[u(0.5)llll] !{\xcaph[1]@(0)}
[d(0.5)] !{\xcaph[1]@(0)}
[d(0.5)] !{\xcaph[1]@(0)}
[lll] !{\xcaph[1]@(0)}
[d(0.5)] !{\xcaph[1]@(0)}
[d(0.5)ll] !{\color{black}\xcaph[1]@(0)}
}\hspace{-3.8pc}$

\caption{A fence diagram of a positive braid $\beta$ on $8$ strands (left) with indications ({blue}) what generators to delete to obtain $\beta(3)$ (right) if $b$ equals $3$.
The closure of $\beta(3)$ is a connected sum of the closures of two $3$-braids and a $2$-braid.}
\label{fig:betaifrombeta}
\end{figure}
The closure of such a $\beta(i)$ is a connected sum of closures of positive braids on $b$ or fewer strands.
Since we have $\b(\beta)=\sum_{k=1}^{n-1}(\sharp\{ a_k\text{ in }\beta\}-1)$, there is an $i$ such that \begin{equation}\label{eq:bettibetai}
\b(\beta(i))\geq\frac{b-1}{b}\b(\beta).\end{equation} We fix such an $i$. 
Let $B_1,\dots, B_l$ be positive braids on at most $b$ strands such that the closure of $\beta(i)$ is the connected sum of the closures of the $B_j$. 
Thus, additivity of the first Betti number and the signature on connected sums (see Lemma~\ref{lemma:sigandunion}),
the assumption ${-\s}>C\b$ for non-trivial positive braids on $b$ strands,
and~\eqref{eq:bettibetai} yield
\begin{align*}
{-\s}(\beta(i))
= \sum_{j=1}^{l}{-\s}(B_j)
>\sum_{j=1}^{l}C\b(B_j)
= C\b(\beta(i))
\geq \frac{C(b-1)}{b}\b(\beta).
\end{align*}
The braid $\beta(i)$ is obtained from $\beta$ by deleting $\b(\beta)-\b(\beta(i))\leq\frac{1}{b}\b(\beta)$ of the generators.
Since by Lemma~\ref{lemma:bandcutting} deleting one generator changes the signature by at most $\pm1$, we get
\begin{align*}
{-\s}(\beta)
&\geq
-\frac{1}{b}\b(\beta) + {-\s}(\beta(i))
\\
&>
-\frac{1}{b}\b(\beta) + \frac{C(b-1)}{b}\b(\beta)
 \\ &=
\frac{C(b-1)-1}{b}\b(\beta).\qedhere
\end{align*}
\end{proof}
\section{Signature of positive $4$-braids}\label{sec:4braids}
In this section we provide a 
$\frac{1}{3}$-linear bound for the signature of positive $4$-braids and we prove the Main Proposition.

\begin{prop}\label{prop:thirdsigbound}
 For all non-trivial positive $4$-braids $\beta$, we have ${-\s}(\beta)>\frac{1}{3}\b(\beta)$.
\end{prop}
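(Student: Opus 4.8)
The plan is to reformulate the target inequality into an integer statement, reduce to the case $c=1$, and then run an induction on the length $\l(\beta)$ whose inductive step peels off a short torus piece; the highly-alternating braids that survive this reduction form the genuine obstacle, which I would attack through the Seifert form of the fence diagram. First I would reduce to $c=1$: the signature and the first Betti number are additive on the split pieces (Lemma~\ref{lemma:sigandunion}), and a single-syllable piece $a_i^n$ satisfies the bound directly. For non-split positive $4$-braids \eqref{eq:b} gives $\b(\beta)=\l(\beta)-3$, so, since $-\s(\beta)$ and $\l(\beta)$ are integers, the claim ${-\s}(\beta)>\tfrac13\b(\beta)$ is equivalent to the clean inequality $-3\s(\beta)\geq\l(\beta)-2$, which I would prove by induction on $\l(\beta)$, checking the finitely many braids with $\b(\beta)\le 3$ separately.

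For the inductive step the engine is the enhanced quasi-morphism estimate of Remark~\ref{rem:SigIsaQM}. Writing $\beta$, up to cyclic permutation and the commutation $a_1a_3=a_3a_1$, as a product of maximal syllables, suppose some syllable has exponent $e\geq 4$; write the corresponding factorization as $\alpha\,a_i^{e}\,\gamma$. Comparing with $\beta'=\alpha\,a_i^{e-3}\,\gamma$ and inserting $\s(a_i^n)=-n+1$ into the defect-$1$ bounds of Remark~\ref{rem:SigIsaQM} gives $-\s(\beta)\geq -\s(\alpha\gamma)+e-2$ and $-\s(\beta')\leq -\s(\alpha\gamma)+(e-3)$, hence $-\s(\beta)\geq -\s(\beta')+1$. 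Since $\l(\beta')=\l(\beta)-3$ and (because $e-3\ge 1$) the braid $\beta'$ still has $c=1$, the inductive hypothesis $-3\s(\beta')\geq\l(\beta')-2$ yields $-3\s(\beta)\geq\l(\beta)-2$. This disposes of every braid containing a syllable of length at least $4$.

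It remains to treat the braids all of whose syllables have length at most $3$: the highly-alternating, torus- and Lorenz-like $4$-braids. These are exactly the braids on which every crude estimate is too lossy — deleting all $a_2$'s (landing on $T(2,p)\sqcup T(2,q)$), or deleting the least frequent generator to pass to a $3$-braid summand and invoking Stoimenow's $\tfrac12$-bound~\cite{Stoimenow_08}, each costs more in deleted crossings than it recovers in the balanced regime. Here I would compute the signature directly from the fence diagram of Section~\ref{sec:SurfacesAndFenceDiagrams}. On the basis of $H_1$ given by the ``brick'' loops between consecutive equal generators (of which there are $\b(\beta)$ many), a standard computation identifies the symmetrised Seifert form with $M-2I$, where $M$ is the signed adjacency matrix of the graph whose within-type edges form disjoint paths and whose remaining edges join bricks of adjacent generator types $a_i,a_{i\pm1}$. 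Consequently $-\s(\beta)$ equals the number of eigenvalues of $M$ that are smaller than $2$ minus the number that exceed $2$, and the bound reduces to showing that at most a third of the eigenvalues of $M$ are at least $2$.

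I expect this eigenvalue count to be the main obstacle. The within-type path blocks alone contribute spectrum inside $(-2,2)$, so all the difficulty lies in controlling how the cross-type $\pm1$ entries can push eigenvalues up to or past the threshold $2$. The saving grace is that in the residual case the syllable lengths are bounded by $3$, so the vertices of $M$ have bounded degree and only finitely many local patterns occur; this is what should make the count tractable, for instance by deleting all cross-type edges and applying Cauchy interlacing to bound the number of eigenvalues produced above $2$ by the rank of the cross-type part. Making this interlacing bound quantitatively meet the factor $\tfrac13$ — rather than merely establishing \emph{some} linear bound — is the delicate point, and is precisely where the structure of $4$-braid fence diagrams, as opposed to general signed graphs, must be used.
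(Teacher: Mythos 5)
Your first two steps are sound: the reduction to $c=1$ via Lemma~\ref{lemma:sigandunion}, the integer reformulation $-3\s(\beta)\geq\l(\beta)-2$ using \eqref{eq:b}, and the syllable-trimming induction all work (the two applications of Remark~\ref{rem:SigIsaQM} do give ${-\s}(\beta)\geq{-\s}(\beta')+1$ when a maximal syllable of exponent $e\geq4$ is shortened by $3$; the only care needed is that $\beta'$ may become trivial or fall below your base-case threshold, which your finite check can absorb). The genuine gap is the residual case, which you explicitly leave open. Braids all of whose syllables have exponent at most $3$ are not a boundary family but the entire heart of the problem: they include $(a_1a_2a_3)^k$, $(a_1a_2a_1a_3)^k$, and essentially every word on which the ratio ${-\s}/\b$ approaches its conjectured minimum. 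Your proposed spectral attack does not close this case. The identification of the symmetrised Seifert form with $M-2I$ on the brick basis is essentially right, and the within-type path blocks do have spectrum in $(-2,2)$; but the interlacing estimate you invoke bounds the number of eigenvalues of $M$ that reach $2$ by the rank of the cross-type part, and in the balanced regime that rank is comparable to the full size of $M$ (for $(a_1a_2a_3)^k$ it is about two thirds of it), so the estimate yields no positive fraction at all, let alone $\frac13$. You have therefore reduced the proposition to a quantitative eigenvalue count that is at least as hard as the original statement on exactly the braids where it matters.

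For comparison, the paper's proof never touches the Seifert matrix and does not need your syllable reduction. It chooses a positive word for $\beta$ (up to cyclic permutation) minimising the number of occurrences of $a_2$, groups these into $n$ maximal blocks, and observes that minimality forces at least two letters from $\{a_1,a_3\}$ between consecutive blocks, whence $k_1+k_3\geq2n$ and $k_2\geq n$, i.e.\ $n\leq\frac{k_1+k_2+k_3}{3}$. Deleting all but one block costs at most $1$ in signature \emph{per block} by Remark~\ref{rem:SigIsaQM} (not per deleted letter, which is the decisive point), and what remains is a connected sum of $T(2,k_1)$, $T(2,k_3)$ and a third $T(2,\cdot)$, on which ${-\s}=\b$; this gives ${-\s}(\beta)\geq k_1+k_2+k_3-2n-1\geq\frac{\b(\beta)}{3}$, with strictness failing only in the extremal case $k_1+k_2+k_3=3n$, which is then handled by retaining two copies of $a_2$ instead of one. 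If you want to salvage your outline, replacing the unproved eigenvalue count by this block-deletion argument in your residual case would complete the proof.
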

For $\b> 21$, Proposition~\ref{prop:thirdsigbound} follows from Corollary~\ref{cor:5/12}.
We provide a complete proof that is independent of Corollary~\ref{cor:5/12}. 
\begin{proof}[Proof of Proposition~\ref{prop:thirdsigbound}]
Let $\beta$ be a positive $4$-braid and choose a positive braid word $w= a_{i_1} a_{i_2}\cdots a_{i_l}$ for $\beta$ or cyclic permutations of $\beta$
such that the number of $ a_2$ in this braid word is minimal among all possible such positive braid words. 
Without loss of generality, we assume that $w$ does contain all three generators $a_i$ at least once.
By applying cyclic permutations and braid relations, we can arrange that the first two letters of $w$ are not $a_2$.
For example, we consider $ a_1 a_1 a_3 a_2 a_2=
~\xygraph{
!{0;/r0.7pc/:}
[u(1.5)]!{\xcapv[3]@(0)}
[lu]!{\xcapv[3]@(0)}
[lu]!{\xcapv[3]@(0)}
[lu]!{\xcapv[3]@(0)}
[u(0.5)r] !{\xcaph[1]@(0)}
[d(0.5)l]!{\xcaph[1]@(0)}
[d(0.5)] !{\xcaph[1]@(0)}
[d(0.5)lll] !{\xcaph[1]@(0)}
[d(0.5)l] !{\xcaph[1]@(0)}
}\hspace{-1.75pc}$
instead of $ a_2 a_1 a_1 a_3 a_2=
~\xygraph{
!{0;/r0.7pc/:}
[u(1.5)]!{\xcapv[3]@(0)}
[lu]!{\xcapv[3]@(0)}
[lu]!{\xcapv[3]@(0)}
[lu]!{\xcapv[3]@(0)}
[u(0.5)r] !{\xcaph[1]@(0)}
[d(0.5)] !{\xcaph[1]@(0)}
[d(0.5)lll] !{\xcaph[1]@(0)}
[d(0.5)l] !{\xcaph[1]@(0)}
[d(0.5)] !{\xcaph[1]@(0)}
}\hspace{-1.75pc}$.
Let $B_1,\ldots,B_n$ be the blocks of consecutive $a_2$ and $k_i$ the number of $ a_i$ in $w$.
Of course $k_2\geq n$ holds, and, by the assumption of minimality of the number of $a_2$ in $w$, we have at least two generators between two consecutive $B_i$, which yields $k_1+k_3\geq 2n$. Therefore,
\begin{equation}\label{eq:Skigeq3n}
\frac{k_1+k_2+k_3}{3} \geq n.
\end{equation}

We first show ${-\s}(\beta)\geq\frac{1}{3}\b(\beta)$.
Let $\beta'$ denote the braid obtained from $\beta$ by removing $B_2,B_3,\ldots,$ and $B_n$. By Remark~\ref{rem:SigIsaQM}, we have
\begin{align*}
{-\s}(\beta) &\geq  {-\s}(\beta') +\sum_{i=2}^{k}{-\s}(B_i)-(n-1) \\
&= {-\s}(\beta') +\sum_{i=2}^{k}(\l(B_i)-1)-(n-1).\end{align*}
The closure of $\beta'$ is a connected sum of the torus links \[T(2,k_1), T(2,\l(B_1))\et T(2,k_3).\] This yields ${-\s}(\beta')=k_1-1+k_3-1+\l(B_1)-1$ by Lemma~\ref{lemma:sigandunion}. Therefore,
\begin{align*}{-\s}(\beta)
&\geq k_1-1+k_3-1+\l(B_1)-1+ \sum_{i=2}^{k}(\l(B_i)-1)-(n-1)
\\
&=k_1-1+k_3-1+\sum_{i=1}^{k}(\l(B_i)-1)-(n-1)
\\
&=
k_1-1+k_3-1+k_2-n-(n-1)\\
&=
k_1+k_2+k_3-2n-1
 \geq
 \frac{k_1+k_2+k_3}{3}-1
 = \frac{\b(\beta)}{3},
\end{align*}
where in the last line \eqref{eq:Skigeq3n} and $\b(\beta)=k_1+k_2+k_3-3$ are used.

We observe that if inequality~\eqref{eq:Skigeq3n} is a strict inequality, then the above calculation proves ${-\s}(\beta)>\frac{1}{3}\b(\beta)$.
Thus, it remains to consider $w$ satisfying $\frac{k_1+k_2+k_3}{3}=n$, which implies that the inequalities
$k_2\geq n$ and $k_1+k_3\geq2n$ are equalities. Therefore, the blocks $B_i$ consist of a single $ a_2$ and in $w$ we have exactly two generators between two consecutive $B_i$.
We write $w$ as $ a_{i_1} a_{j_1} a_2 a_{i_2} a_{j_2}a_2\cdots a_{i_n} a_{j_n} a_2$, for some $i_l,j_l$ in $\{1,3\}$.

Since $\beta$ contains all types of generators and is non-trivial, we have $k_2=n\geq 2$.
Removing all but the last two $ a_2$ in $w$ yields a positive $4$-braid $\beta''$ with \[\l(\beta'') = \l(\beta)-(n-2) = 3n-(n-2) = 2n+2.\]
The braid $\beta''$ satisfies ${-\s}=\b$, which is seen as follows.
The braid $\beta''$ equals $ a_1^{i} a_3^{j} a_2\gamma a_2$ with $i+j=2n-2$, where $\gamma$ is $a_1 a_3$, $a_1^2$ or $ a_3^2$.
The closure of $ a_1^{i} a_3^{j} a_2 a_1 a_3 a_2$ with $i+j=2n-2$ is the torus link $T(2,2n)$, for which ${-\s}=\b$ holds. 
The closure of $ a_1^{i} a_3^{j} a_2 a_1 a_1 a_2$ is a connected sum of the torus link $T(2,j)$ and the closure of the $3$-braid
$ a_1^{i} a_2 a_1 a_1 a_2$
, which both satisfy ${-\s}=\b$. Similarly, the closure of $a_1^{i} a_3^{j} a_2 a_3 a_3 a_2$ is a connected sum of the torus link $T(2,i)$ and the closure of the $3$-braid
$ a_1^{j} a_2 a_1 a_1 a_2$.

Using Lemma~\ref{lemma:bandcutting}, ${-\s}(\beta'')=\b(\beta'')$ and 
$\b(\beta)=k_1+k_2+k_3-3$ we calculate
\begin{align*}
{-\s}(\beta)
&\geq
{-\s}(\beta'')-(n-2)
=
\b(\beta'')-n+2
=
2n-1-n+2
\\
&=
n+1
=
\frac{k_1+k_2+k_3}{3}+1
=
\frac{\b(\beta)}{3}+2.
\qedhere
\end{align*}
\end{proof}
The strategy for the proof of the Main Proposition is the following. To a braid $\beta$ we add roughly $\frac{1}{2}\l(\beta)$ generators
such that the resulting braid $\tilde{\beta}$ is simple enough that one can prove
${-\S}(\tilde{\beta})\geq \frac{2}{3}\l(\tilde{\beta})=\l(\beta)$. Since ${-\S}(\beta)\geq -\frac{1}{2}\l(\beta) -\S(\tilde{\beta})$ holds by Lemma~\ref{lemma:bandcutting},
we conclude that ${-\S}(\beta)\geq \frac{1}{2}\l(\beta)$ holds.
In fact, this only works for a part of the braid (at least for $\frac{2}{3}$ of the braid in terms of length) and for the rest of the braid we are only able to prove ${-\S}\geq\frac{1}{4}\l$.
Combining this yields
${-\S}(\beta)\geq\frac{2}{3}\frac{1}{2}\l(\beta)+\frac{1}{3}\frac{1}{4}\l(\beta)=\frac{5}{12}\l(\beta)$.

The braid $\tilde{\beta}$ will be obtained from $\beta$ using the following Lemma.
\begin{lemma}\label{lemma:4->6}
 Let $B$ be a positive $4$-braid of length $4$. If $B$ is not
\[a_{2} a_{1} a_{1} a_{2}=~\xygraph{
!{0;/r0.7pc/:}
[u(1.25)]!{\xcapv[2.5]@(0)}
[lu]!{\xcapv[2.5]@(0)}
[lu]!{\xcapv[2.5]@(0)}
[lu]!{\xcapv[2.5]@(0)}
[u(0.5)r] !{\xcaph[1]@(0)}
[d(0.5)ll] !{\xcaph[1]@(0)}
[d(0.5)l] !{\xcaph[1]@(0)}
[d(0.5)] !{\xcaph[1]@(0)}
}\hspace{-1.5pc} \quad\text{or}\quad a_{2} a_{3} a_{3} a_{2}=~\xygraph{
!{0;/r0.7pc/:}
[u(1.25)]!{\xcapv[2.5]@(0)}
[lu]!{\xcapv[2.5]@(0)}
[lu]!{\xcapv[2.5]@(0)}
[lu]!{\xcapv[2.5]@(0)}
[u(0.5)r] !{\xcaph[1]@(0)}
[d(0.5)] !{\xcaph[1]@(0)}
[d(0.5)l] !{\xcaph[1]@(0)}
[d(0.5)ll] !{\xcaph[1]@(0)}
}\hspace{-1pc},\]
then 
one can add two generators to $B$ such that it becomes \[\Delta= a_{1} a_{3} a_{2} a_{1} a_{3} a_{2}=~\xygraph{
!{0;/r0.7pc/:}
[u(1.25)]!{\xcapv[2.5]@(0)}
[lu]!{\xcapv[2.5]@(0)}
[lu]!{\xcapv[2.5]@(0)}
[lu]!{\xcapv[2.5]@(0)}
[u(0.5)r] !{\xcaph[1]@(0)}
[d(0.5)] !{\xcaph[1]@(0)}
[lll] !{\xcaph[1]@(0)}
[d(0.5)] !{\xcaph[1]@(0)}
[d(0.5)] !{\xcaph[1]@(0)}
[lll] !{\xcaph[1]@(0)}
}\hspace{-1.5pc},\]
\[L= a_{1} a_{2} a_{3} a_{1} a_{2} a_{3}=~\xygraph{
!{0;/r0.7pc/:}
[u(1.5)]!{\xcapv[3]@(0)}
[lu]!{\xcapv[3]@(0)}
[lu]!{\xcapv[3]@(0)}
[lu]!{\xcapv[3]@(0)}
[u(0.5)rr] !{\xcaph[1]@(0)}
[d(0.5)ll] !{\xcaph[1]@(0)}
[d(0.5)] !{\xcaph[1]@(0)}
[lll] !{\xcaph[1]@(0)}
[d(0.5)] !{\xcaph[1]@(0)}
[d(0.5)ll] !{\xcaph[1]@(0)}
}\hspace{-1.5pc}, \quad\text{or}\quad
R= a_{3} a_{2} a_{1} a_{3} a_{2} a_{1}=~\xygraph{
!{0;/r0.7pc/:}
[u(1.5)]!{\xcapv[3]@(0)}
[lu]!{\xcapv[3]@(0)}
[lu]!{\xcapv[3]@(0)}
[lu]!{\xcapv[3]@(0)}
[u(0.5)] !{\xcaph[1]@(0)}
[d(0.5)] !{\xcaph[1]@(0)}
[d(0.5)] !{\xcaph[1]@(0)}
[lll] !{\xcaph[1]@(0)}
[d(0.5)] !{\xcaph[1]@(0)}
[d(0.5)] !{\xcaph[1]@(0)}
}\hspace{-1.5pc}.\]

\end{lemma}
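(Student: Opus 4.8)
The plan is to prove this by a finite verification that the reformulation and the symmetries of the three target words make short. First I would reformulate: ``adding two generators to $B$ so that it becomes $T$'' means exactly that $B$ is obtained from some positive word for $T\in\{\Delta,L,R\}$ by deleting two letters, i.e.\ that $B$ is a length-$4$ subword (up to braid relations) of one of the targets. Since each of $\Delta$, $L$, $R$ uses every generator exactly twice, any $B$ that extends must arise by deleting two of these six letters; in particular no generator occurs more than twice in a suitable word for $B$. So I would enumerate the length-$4$ positive $4$-braids in which each generator occurs at most twice and, for each, either write down the two letters to insert or identify it as an exception. (A block in which some generator occurs three or more times cannot extend at all and, as one checks, does not arise in the intended block decomposition, so such blocks may be set aside.)

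To keep the enumeration small I would use two symmetries that preserve $\{\Delta,L,R\}$ together with the pair of exceptions: the flip automorphism $a_1\leftrightarrow a_3$, $a_2\mapsto a_2$ (which fixes $\Delta$, interchanges $L$ and $R$, and interchanges $a_2 a_1 a_1 a_2$ with $a_2 a_3 a_3 a_2$), and word reversal (which again stabilises the target set and fixes each palindromic exception). Up to these I would sort the cases by how the at most two occurrences of $a_2$ sit among the commuting letters $a_1,a_3$. In every non-exceptional case the insertion is immediate: a word of the shape $a_2 a_i a_j a_2$ with $\{i,j\}=\{1,3\}$ completes to $\Delta$; the unique $a_2$-free word, rewritten as $a_1 a_3 a_1 a_3$, completes to $L$ by placing an $a_2$ after each $a_1$; and a word already exhibiting four of the six letters of $L$ (resp.\ of $\Delta$) in the right order completes to that target by inserting the two missing letters. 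Producing such an insertion in each remaining case is routine bookkeeping.

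The hard part will be the negative side: proving that $a_2 a_1 a_1 a_2$ and $a_2 a_3 a_3 a_2$ truly cannot be completed, so that they must be listed as exceptions rather than being covered by an insertion I merely failed to find. This is a non-existence statement, and the cleanest route I see is an invariant rather than a word-by-word search. The permutation homomorphism $B_4\to S_4$ is constant on the positive words of a fixed braid, and the target permutations are $(14)(23)$ for $\Delta$ and $(13)(24)$ for $L$ and $R$, whereas $a_2 a_1 a_1 a_2$ maps to the identity; passing to the quotient $S_4\to S_4/V_4\cong S_3$ makes the obstruction especially transparent in the degenerate sub-cases. Inserting two generators changes this permutation in a controlled way, so checking that none of the finitely many insertions produces a target permutation is a short computation. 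The principal case, insertion of two copies of $a_3$, is the most telling: $a_2 a_1 a_1 a_2$ fixes the fourth strand, and two crossings of type $a_3$ cannot route that strand to the endpoint demanded by either $(14)(23)$ or $(13)(24)$. The flip symmetry then disposes of $a_2 a_3 a_3 a_2$, completing the argument.
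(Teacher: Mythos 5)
Your overall plan---reduce by the flip $a_1\leftrightarrow a_3$ and by word reversal, then enumerate the length-$4$ words and exhibit explicit insertions---is the same strategy the paper uses. But there is a genuine gap in how you cut down the enumeration. The inference ``since each of $\Delta$, $L$, $R$ uses every generator exactly twice, any $B$ that extends must arise by deleting two of these six letters, so no generator occurs more than twice in a suitable word for $B$'' is false: it only inspects the particular words $a_1a_3a_2a_1a_3a_2$, $a_1a_2a_3a_1a_2a_3$, $a_3a_2a_1a_3a_2a_1$, whereas ``adding a generator'' permits rewriting by braid relations between the two insertions, so $B$ need only be obtainable by deleting letters from \emph{some} positive word for the target, and these braids admit words with quite different letter multiplicities (for instance $\Delta=a_1a_2a_1a_3a_2a_1$ and $L=a_1a_1a_2a_1a_3a_2$). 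As a result, the blocks you set aside in the parenthetical remark---those in which some generator occurs three or more times, such as $a_1^4$, $a_1^3a_3$, $a_1^3a_2$---are excluded on two false grounds: they do arise in the decomposition of $\beta^n$ into consecutive length-$4$ subwords used in the Main Proposition (take $\beta=a_1^4a_2a_3a_2a_3$, say), and they do extend. For example $a_1^4$ extends to $L$: add an $a_2$ to get $a_1a_1a_1a_2a_1=a_1a_1a_2a_1a_2$, then add an $a_3$ to get $a_1a_1a_2a_1a_3a_2=L$. The paper's proof devotes its first case exactly to these $a_2$-free words and its second case includes words like $a_1^2a_2a_1$; omitting them leaves the lemma unproved precisely for inputs the application produces, and the multiplicity bound you use to justify the omission is not a valid invariant of a positive braid.

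A second, less serious point: the part you single out as ``the hard part''---showing that $a_2a_1a_1a_2$ and $a_2a_3a_3a_2$ genuinely cannot be completed---is not part of the statement. The lemma is a pure existence claim for the non-exceptional $B$; the two exceptional words are simply excluded from the hypothesis and are handled separately in the proof of the Main Proposition, so no non-extendability argument is needed. (If you did want to prove it, the sketch given is incomplete in any case: the insertions are not restricted to two copies of $a_3$, and the permutation of a braid obtained by inserting letters into a word is not controlled merely by multiplying by the corresponding transpositions at the end, so the finitely many cases to check are the insertions at all positions of all pairs of generators.)
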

Here, `adding a generator to a positive braid $\beta$' means choosing some positive braid word for $\beta$ and then adding a generator $ a_i$ somewhere in this word.
\begin{proof}
We assume that $B$ is represented as a braid word such that the number of $ a_{2}$ in $B$ is minimal. For example,
$ a_{2} a_{1} a_{2} a_{1}=~\xygraph{
!{0;/r0.7pc/:}
[u(1.25)]!{\xcapv[2.5]@(0)}
[lu]!{\xcapv[2.5]@(0)}
[lu]!{\xcapv[2.5]@(0)}
[lu]!{\xcapv[2.5]@(0)}
[u(0.5)] !{\xcaph[1]@(0)}
[d(0.5)] !{\xcaph[1]@(0)}
[d(0.5)ll] !{\xcaph[1]@(0)}
[d(0.5)] !{\xcaph[1]@(0)}
}\hspace{-1.5pc}$ is not considered because it represents the same braid as $ a_{1} a_{2} a_{1} a_{1}=~\xygraph{
!{0;/r0.7pc/:}
[u(1.25)]!{\xcapv[2.5]@(0)}
[lu]!{\xcapv[2.5]@(0)}
[lu]!{\xcapv[2.5]@(0)}
[lu]!{\xcapv[2.5]@(0)}
[u(0.5)] !{\xcaph[1]@(0)}
[d(0.5)l] !{\xcaph[1]@(0)}
[d(0.5)] !{\xcaph[1]@(0)}
[d(0.5)ll] !{\xcaph[1]@(0)}
}\hspace{-1.5pc}$.
We group all possible $B$ according to the number of $a_{2}$ contained in $B$ and proceed case by case.
Cases are consider only up to rotations and reflections. Newly added generators are marked in {blue}.

\textbullet\quad The braids of length $4$ with no $ a_{2}$ are $~\xygraph{
!{0;/r0.7pc/:}
[u(1.5)]!{\xcapv[2.5]@(0)}
[lu]!{\xcapv[2.5]@(0)}
[lu]!{\xcapv[2.5]@(0)}
[lu]!{\xcapv[2.5]@(0)}
[u(0.5)] !{\xcaph[1]@(0)}
[d(0.5)l] !{\xcaph[1]@(0)}
[d(0.5)l] !{\xcaph[1]@(0)}
[d(0.5)l] !{\xcaph[1]@(0)}
}\hspace{-1.25pc}$, $~\xygraph{
!{0;/r0.7pc/:}
[u(1.25)]!{\xcapv[2]@(0)}
[lu]!{\xcapv[2]@(0)}
[lu]!{\xcapv[2]@(0)}
[lu]!{\xcapv[2]@(0)}
[u(0.5)] !{\xcaph[1]@(0)}
[d(0.5)l] !{\xcaph[1]@(0)}
[d(0.5)l] !{\xcaph[1]@(0)}
[ur] !{\xcaph[1]@(0)}
}\hspace{-1pc}$, and $~\xygraph{
!{0;/r0.7pc/:}
[u(1)]!{\xcapv[1.5]@(0)}
[lu]!{\xcapv[1.5]@(0)}
[lu]!{\xcapv[1.5]@(0)}
[lu]!{\xcapv[1.5]@(0)}
[u(0.5)] !{\xcaph[1]@(0)}
[d(0.5)l] !{\xcaph[1]@(0)}
[u(0.5)r] !{\xcaph[1]@(0)}
[d(0.5)l] !{\xcaph[1]@(0)}
}\hspace{-0.5pc}.$
In $~\xygraph{
!{0;/r0.7pc/:}
[u(1.5)]!{\xcapv[2.5]@(0)}
[lu]!{\xcapv[2.5]@(0)}
[lu]!{\xcapv[2.5]@(0)}
[lu]!{\xcapv[2.5]@(0)}
[u(0.5)] !{\xcaph[1]@(0)}
[d(0.5)l] !{\xcaph[1]@(0)}
[d(0.5)l] !{\xcaph[1]@(0)}
[d(0.5)l] !{\xcaph[1]@(0)}
}\hspace{-1.5pc}$ we first add one $ a_{2}$ to get
$\;\xygraph{
!{0;/r0.7pc/:}
[u(1.5)]!{\xcapv[3]@(0)}
[lu]!{\xcapv[3]@(0)}
[lu]!{\xcapv[3]@(0)}
[lu]!{\xcapv[3]@(0)}
[u(0.5)] !{\xcaph[1]@(0)}
[d(0.5)] !{\color{cyan}\xcaph[1]@(0)}
[d(0.5)ll] !{\color{black}\xcaph[1]@(0)}
[d(0.5)l] !{\xcaph[1]@(0)}
[d(0.5)l] !{\xcaph[1]@(0)}
}\hspace{-1.75pc}=~\xygraph{
!{0;/r0.7pc/:}
[u(1.5)]!{\xcapv[3]@(0)}
[lu]!{\xcapv[3]@(0)}
[lu]!{\xcapv[3]@(0)}
[lu]!{\xcapv[3]@(0)}
[u(0.5)r] !{\xcaph[1]@(0)}
[d(0.5)ll] !{\xcaph[1]@(0)}
[d(0.5)] !{\xcaph[1]@(0)}
[d(0.5)ll] !{\xcaph[1]@(0)}
[d(0.5)l] !{\xcaph[1]@(0)}
}\hspace{-1.75pc}$ and then add a $ a_{3}$
to get $\;\xygraph{
!{0;/r0.7pc/:}
[u(1.5)]!{\xcapv[3]@(0)}
[lu]!{\xcapv[3]@(0)}
[lu]!{\xcapv[3]@(0)}
[lu]!{\xcapv[3]@(0)}
[u(0.5)r] !{\xcaph[1]@(0)}
[d(0.5)] !{\color{cyan}\xcaph[1]@(0)}
[lll] !{\color{black}\xcaph[1]@(0)}
[d(0.5)] !{\xcaph[1]@(0)}
[d(0.5)ll] !{\xcaph[1]@(0)}
[d(0.5)l] !{\xcaph[1]@(0)}
}\hspace{-1.75pc}=~\xygraph{
!{0;/r0.7pc/:}
[u(1.75)]!{\xcapv[3.5]@(0)}
[lu]!{\xcapv[3.5]@(0)}
[lu]!{\xcapv[3.5]@(0)}
[lu]!{\xcapv[3.5]@(0)}
[u(0.5)r] !{\xcaph[1]@(0)}
[d(0.5)] !{\color{black}\xcaph[1]@(0)}
[d(0.5)ll] !{\color{black}\xcaph[1]@(0)}
[d(0.5)ll] !{\xcaph[1]@(0)}
[d(0.5)] !{\xcaph[1]@(0)}
[d(0.5)ll] !{\xcaph[1]@(0)}
}\hspace{-2pc}=~\xygraph{
!{0;/r0.7pc/:}
[u(1.5)]!{\xcapv[3]@(0)}
[lu]!{\xcapv[3]@(0)}
[lu]!{\xcapv[3]@(0)}
[lu]!{\xcapv[3]@(0)}
[u(0.5)rr] !{\xcaph[1]@(0)}
[d(0.5)ll] !{\xcaph[1]@(0)}
[d(0.5)] !{\xcaph[1]@(0)}
[lll] !{\xcaph[1]@(0)}
[d(0.5)] !{\xcaph[1]@(0)}
[d(0.5)ll] !{\xcaph[1]@(0)}
}\hspace{-1.5pc}=~L$.
In the other cases we add two $ a_{2}$ as follows.
\[\;\xygraph{
!{0;/r0.7pc/:}
[u(1.5)]!{\xcapv[3]@(0)}
[lu]!{\xcapv[3]@(0)}
[lu]!{\xcapv[3]@(0)}
[lu]!{\xcapv[3]@(0)}
[u(0.5)r] !{\color{cyan}\xcaph[1]@(0)}
[d(0.5)ll] !{\color{black}\xcaph[1]@(0)}
[d(0.5)] !{\color{cyan}\xcaph[1]@(0)}
[d(0.5)ll] !{\color{black}\xcaph[1]@(0)}
[d(0.5)l] !{\xcaph[1]@(0)}
[u(1.5)r] !{\xcaph[1]@(0)}
}\hspace{-1.75pc}=L \et \;\xygraph{
!{0;/r0.7pc/:}
[u(1.25)]!{\xcapv[2.5]@(0)}
[lu]!{\xcapv[2.5]@(0)}
[lu]!{\xcapv[2.5]@(0)}
[lu]!{\xcapv[2.5]@(0)}
[u(0.5)r] !{\color{cyan}\xcaph[1]@(0)}
[d(0.5)ll] !{\color{black}\xcaph[1]@(0)}
[r] !{\xcaph[1]@(0)}
[d(0.5)ll] !{\color{cyan}\xcaph[1]@(0)}
[d(0.5)ll] !{\color{black}\xcaph[1]@(0)}
[r] !{\xcaph[1]@(0)}
}\hspace{-1.5pc}=\Delta.\]

\textbullet\quad The following are all $B$ with one $ a_{2}$. We have always indicated how to add two generators ({blue}) yielding $L,R$, or $\Delta$.
\[
\xygraph{
!{0;/r0.7pc/:}
[u(1.5)]!{\xcapv[3]@(0)}
[lu]!{\xcapv[3]@(0)}
[lu]!{\xcapv[3]@(0)}
[lu]!{\xcapv[3]@(0)}
[u(0.5)r] !{\xcaph[1]@(0)}
[d(0.5)] !{\color{cyan}\xcaph[1]@(0)}
[lll] !{\color{black}\xcaph[1]@(0)}
[d(0.5)] !{\color{cyan}\xcaph[1]@(0)}
[d(0.5)ll] !{\color{black}\xcaph[1]@(0)}
[d(0.5)l] !{\xcaph[1]@(0)}
}\hspace{-1.75pc}=L, \hspace{1pc}\xygraph{
!{0;/r0.7pc/:}
[u(1.5)]!{\xcapv[3]@(0)}
[lu]!{\xcapv[3]@(0)}
[lu]!{\xcapv[3]@(0)}
[lu]!{\xcapv[3]@(0)}
[u(0.5)r] !{\xcaph[1]@(0)}
[d(0.5)] !{\xcaph[1]@(0)}
[lll] !{\xcaph[1]@(0)}
[d(0.5)] !{\color{cyan}\xcaph[1]@(0)}
[d(0.5)ll] !{\color{black}\xcaph[1]@(0)}
[d(0.5)l] !{\color{cyan}\xcaph[1]@(0)\color{black}}
}\hspace{-1.5pc}=L,\hspace{1pc}\xygraph{
!{0;/r0.7pc/:}
[u(1.5)]!{\xcapv[3]@(0)}
[lu]!{\xcapv[3]@(0)}
[lu]!{\xcapv[3]@(0)}
[lu]!{\xcapv[3]@(0)}
[u(0.5)r] !{\color{cyan}\xcaph[1]@(0)}
[d(0.5)] !{\xcaph[1]@(0)}
[lll] !{\color{black}\xcaph[1]@(0)}
[d(0.5)] !{\xcaph[1]@(0)}
[d(0.5)ll] !{\xcaph[1]@(0)}
[d(0.5)l] !{\xcaph[1]@(0)}
}\hspace{-1.75pc}=L,\et\xygraph{
!{0;/r0.7pc/:}
[u(1.5)]!{\xcapv[3]@(0)}
[lu]!{\xcapv[3]@(0)}
[lu]!{\xcapv[3]@(0)}
[lu]!{\xcapv[3]@(0)}
[u(0.5)r] !{\color{cyan}\xcaph[1]@(0)}
[d(0.5)] !{\color{black}\xcaph[1]@(0)}
[lll] !{\color{cyan}\xcaph[1]@(0)}
[d(0.5)] !{\color{black}\xcaph[1]@(0)}
[d(0.5)ll] !{\xcaph[1]@(0)}
[d(0.5)l] !{\xcaph[1]@(0)}
}\hspace{-1.75pc}=L
\]

\textbullet\quad If $B$ contains two $ a_{2}$, but is not
\[a_{2} a_{1} a_{1} a_{2}=\xygraph{
!{0;/r0.7pc/:}
[u(1.25)]!{\xcapv[2.5]@(0)}
[lu]!{\xcapv[2.5]@(0)}
[lu]!{\xcapv[2.5]@(0)}
[lu]!{\xcapv[2.5]@(0)}
[u(0.5)r] !{\xcaph[1]@(0)}
[d(0.5)ll] !{\xcaph[1]@(0)}
[d(0.5)l] !{\xcaph[1]@(0)}
[d(0.5)] !{\xcaph[1]@(0)}
}\hspace{-1.5pc}\vel a_{2} a_{3} a_{3} a_{2}=\xygraph{
!{0;/r0.7pc/:}
[u(1.25)]!{\xcapv[2.5]@(0)}
[lu]!{\xcapv[2.5]@(0)}
[lu]!{\xcapv[2.5]@(0)}
[lu]!{\xcapv[2.5]@(0)}
[u(0.5)r] !{\xcaph[1]@(0)}
[d(0.5)] !{\xcaph[1]@(0)}
[d(0.5)l] !{\xcaph[1]@(0)}
[d(0.5)ll] !{\xcaph[1]@(0)}
}\hspace{-1.5pc},\]
then it is one of the following (as before, it is indicated in {blue} which generators to add).
\[\;\xygraph{
!{0;/r0.7pc/:}
[u(1.5)]!{\xcapv[3]@(0)}
[lu]!{\xcapv[3]@(0)}
[lu]!{\xcapv[3]@(0)}
[lu]!{\xcapv[3]@(0)}
[u(0.5)r] !{\xcaph[1]@(0)}
[d(0.5)] !{\color{cyan}\xcaph[1]@(0)}
[lll] !{\xcaph[1]@(0)}
[d(0.5)] !{\color{black}\xcaph[1]@(0)}
[d(0.5)ll] !{\xcaph[1]@(0)}
[d(0.5)l] !{\xcaph[1]@(0)}
}\hspace{-1.75pc}=L,\hspace{1pc}\xygraph{
!{0;/r0.7pc/:}
[u(1.25)]!{\xcapv[2.5]@(0)}
[lu]!{\xcapv[2.5]@(0)}
[lu]!{\xcapv[2.5]@(0)}
[lu]!{\xcapv[2.5]@(0)}
[u(0.5)r] !{\xcaph[1]@(0)}
[d(0.5)] !{\color{cyan}\xcaph[1]@(0)}
[lll] !{\xcaph[1]@(0)}
[d(0.5)] !{\color{black}\xcaph[1]@(0)}
[d(0.5)ll] !{\xcaph[1]@(0)}
[r] !{\xcaph[1]@(0)}
}\hspace{-1.75pc}=\Delta,\hspace{1pc}\xygraph{
!{0;/r0.7pc/:}
[u(1.25)]!{\xcapv[2.5]@(0)}
[lu]!{\xcapv[2.5]@(0)}
[lu]!{\xcapv[2.5]@(0)}
[lu]!{\xcapv[2.5]@(0)}
[u(0.5)r] !{\xcaph[1]@(0)}
[d(0.5)ll] !{\xcaph[1]@(0)}
[d(0.5)] !{\xcaph[1]@(0)}
[d(0.5)] !{\color{cyan}\xcaph[1]@(0)}
[lll] !{\xcaph[1]@(0)}
[ur] !{\color{black}\xcaph[1]@(0)}
}\hspace{-1.5pc}=\Delta,\hspace{1pc}\xygraph{
!{0;/r0.7pc/:}
[u(1.5)]!{\xcapv[3]@(0)}
[lu]!{\xcapv[3]@(0)}
[lu]!{\xcapv[3]@(0)}
[lu]!{\xcapv[3]@(0)}
[u(0.5)] !{\xcaph[1]@(0)}
[d(0.5)] !{\xcaph[1]@(0)}
[d(0.5)] !{\color{cyan}\xcaph[1]@(0)}
[lll] !{\xcaph[1]@(0)}
[d(0.5)] !{\color{black}\xcaph[1]@(0)}
[d(0.5)ll] !{\xcaph[1]@(0)}
}\hspace{-1.75pc}=\Delta,~\xygraph{
!{0;/r0.7pc/:}
[u(1.25)]!{\xcapv[3]@(0)}
[lu]!{\xcapv[3]@(0)}
[lu]!{\xcapv[3]@(0)}
[lu]!{\xcapv[3]@(0)}
[u(0.5)rr] !{\xcaph[1]@(0)}
[d(0.5)ll] !{\xcaph[1]@(0)}
[d(0.5)] !{\color{cyan}\xcaph[1]@(0)}
[lll] !{\xcaph[1]@(0)}
[d(0.5)] !{\color{black}\xcaph[1]@(0)}
[d(0.5)ll] !{\xcaph[1]@(0)}
}\hspace{-1.75pc}=L.\]

\textbullet\quad Finally, there are the following two $B$ with three or four $ a_{2}$.
\[\xygraph{
!{0;/r0.7pc/:}
[u(2)]!{\xcapv[3.5]@(0)}
[lu]!{\xcapv[3.5]@(0)}
[lu]!{\xcapv[3.5]@(0)}
[lu]!{\xcapv[3.5]@(0)}
[u(0.5)r] !{\xcaph[1]@(0)}
[d(0.5)] !{\color{cyan}\xcaph[1]@(0)}
[d(0.5)ll] !{\color{black}\xcaph[1]@(0)}
[d(0.5)ll] !{\color{cyan}\xcaph[1]@(0)}
[d(0.5)] !{\color{black}\xcaph[1]@(0)}
[d(0.5)ll] !{\xcaph[1]@(0)}
}\hspace{-1.75pc}=L\et\xygraph{
!{0;/r0.7pc/:}
[u(2)]!{\xcapv[3.5]@(0)}
[lu]!{\xcapv[3.5]@(0)}
[lu]!{\xcapv[3.5]@(0)}
[lu]!{\xcapv[3.5]@(0)}
[u(0.5)r] !{\xcaph[1]@(0)}
[d(0.5)] !{\color{cyan}\xcaph[1]@(0)}
[d(0.5)ll] !{\color{black}\xcaph[1]@(0)}
[d(0.5)l] !{\xcaph[1]@(0)}
[d(0.5)ll] !{\color{cyan}\xcaph[1]@(0)}
[d(0.5)] !{\color{black}\xcaph[1]@(0)}
}\hspace{-1.75pc}=L.\qedhere\]
\end{proof}

\begin{proof}[Proof of the Main Proposition]
Let $\beta= a_{i_1} a_{i_2}\cdots a_{i_{\l}}$ be a positive 4-braid of length $\l$. We fix a positive integer $n$ that is a multiple of $4$ and study $\beta^n$, which is a braid of length $n\l$.
First, we write $\beta^n$ as $B_1B_2\cdots B_{\frac{n\l}{4}}$, where every $B_i$ is a positive braid of length $4$.
Let $k$ be the number of $ a_{2} a_{1} a_{1} a_{2}=~\xygraph{
!{0;/r0.7pc/:}
[u(1.25)]!{\xcapv[2.5]@(0)}
[lu]!{\xcapv[2.5]@(0)}
[lu]!{\xcapv[2.5]@(0)}
[lu]!{\xcapv[2.5]@(0)}
[u(0.5)r] !{\xcaph[1]@(0)}
[d(0.5)ll] !{\xcaph[1]@(0)}
[d(0.5)l] !{\xcaph[1]@(0)}
[d(0.5)] !{\xcaph[1]@(0)}
}\hspace{-1.5pc}$ and $ a_{2} a_{3} a_{3} a_{2}=~\xygraph{
!{0;/r0.7pc/:}
[u(1.25)]!{\xcapv[2.5]@(0)}
[lu]!{\xcapv[2.5]@(0)}
[lu]!{\xcapv[2.5]@(0)}
[lu]!{\xcapv[2.5]@(0)}
[u(0.5)r] !{\xcaph[1]@(0)}
[d(0.5)] !{\xcaph[1]@(0)}
[d(0.5)l] !{\xcaph[1]@(0)}
[d(0.5)ll] !{\xcaph[1]@(0)}
}\hspace{-1.5pc}$ among the $B_i$.

We may assume that $k$ is less or equal than $\frac{1}{3}\frac{n\l}{4}=\frac{n\l}{12}$.
For if this were not the case, we switch $\beta^n$ to one of the cyclic permutations $\beta^n_1= a_{i_1}^{-1}\beta^n a_{i_1}$ or $\beta^n_2= a_{i_2}^{-1} a_{i_1}^{-1}\beta^n a_{i_1} a_{i_2}$,
which have the same closure 
as $\beta^n$.
It is easy to see that if we decompose $\beta^n,\beta^n_1,$ and $\beta^n_2$ into blocks of length $4$ and add up the number of
$~\xygraph{
!{0;/r0.7pc/:}
[u(1.25)]!{\xcapv[2.5]@(0)}
[lu]!{\xcapv[2.5]@(0)}
[lu]!{\xcapv[2.5]@(0)}
[lu]!{\xcapv[2.5]@(0)}
[u(0.5)r] !{\xcaph[1]@(0)}
[d(0.5)ll] !{\xcaph[1]@(0)}
[d(0.5)l] !{\xcaph[1]@(0)}
[d(0.5)] !{\xcaph[1]@(0)}
}\hspace{-1.5pc}$ and $~\xygraph{
!{0;/r0.7pc/:}
[u(1.25)]!{\xcapv[2.5]@(0)}
[lu]!{\xcapv[2.5]@(0)}
[lu]!{\xcapv[2.5]@(0)}
[lu]!{\xcapv[2.5]@(0)}
[u(0.5)r] !{\xcaph[1]@(0)}
[d(0.5)] !{\xcaph[1]@(0)}
[d(0.5)l] !{\xcaph[1]@(0)}
[d(0.5)ll] !{\xcaph[1]@(0)}
}\hspace{-1.5pc}$ in all three decompositions,
we get at most $\frac{n \l}{4}$; thus, $k\leq \frac{1}{3}\frac{n\l}{4}$ for at least one of $\beta^n,\beta^n_1,$ and $\beta^n_2$.

Now, we apply Lemma~\ref{lemma:4->6} to change $\beta^n$ to $\widetilde{\beta^n}=\widetilde{B_1}\widetilde{B_2}\cdots \widetilde{B_{\frac{n\l}{4}}}$,
where the $\widetilde{B_i}$ are braid words of length 4 or 6 that are chosen as follows. If $B_i$ is
$~\xygraph{
!{0;/r0.7pc/:}
[u(1.25)]!{\xcapv[2.5]@(0)}
[lu]!{\xcapv[2.5]@(0)}
[lu]!{\xcapv[2.5]@(0)}
[lu]!{\xcapv[2.5]@(0)}
[u(0.5)r] !{\xcaph[1]@(0)}
[d(0.5)ll] !{\xcaph[1]@(0)}
[d(0.5)l] !{\xcaph[1]@(0)}
[d(0.5)] !{\xcaph[1]@(0)}
}\hspace{-1.5pc}$ or $~\xygraph{
!{0;/r0.7pc/:}
[u(1.25)]!{\xcapv[2.5]@(0)}
[lu]!{\xcapv[2.5]@(0)}
[lu]!{\xcapv[2.5]@(0)}
[lu]!{\xcapv[2.5]@(0)}
[u(0.5)r] !{\xcaph[1]@(0)}
[d(0.5)] !{\xcaph[1]@(0)}
[d(0.5)l] !{\xcaph[1]@(0)}
[d(0.5)ll] !{\xcaph[1]@(0)}
}\hspace{-1.5pc}$, then $\widetilde{B_i}$ is $B_i$.
Otherwise, $\widetilde{B_i}$ is equal to $L,R,$ or $\Delta$ such that $\widetilde{B_i}$ can be obtained from $B_i$ by adding $2$ generators, which is possible by Lemma~\ref{lemma:4->6}.
By Lemma~\ref{lemma:bandcutting} we have \[{-\s}(\beta^n) \geq  {-\s}(\widetilde{\beta^n})-2(\frac{nl}{4}-k).\]

For a given braid $\alpha$, let $\alpha^\text{rot}$ denote the braid obtained from a braid diagram for $\alpha$ by a planar rotation of $180$ degrees. Then the following holds
\begin{equation}\label{eq:sigbetabetar}
{-\s}(\widetilde{\beta^n}(\widetilde{\beta^n})^\text{rot}) \geq  2k {-\s}(\Delta^{2(\frac{n\l}{4}-k)}) = 2k+8(\frac{n\l}{4}-k)-1.
\end{equation}
Before proving \eqref{eq:sigbetabetar}, we use it to finish the proof.
Since $\widetilde{\beta^n}$ and $(\widetilde{\beta^n})^\text{rot}$ have the same closure (up to changing the orientation) and $\s$ is a quasi-morphism of defect $3$,
we get \begin{align*}
-2\s(\widetilde{\beta^n}) &= {-\s}(\widetilde{\beta^n}){-\s}(\widetilde{\beta^n}^\text{rot}) \geq {-\s}(\widetilde{\beta^n}(\widetilde{\beta^n})^\text{rot})-3\\
&\overset{\eqref{eq:sigbetabetar}}{\geq}\left(2k+8(\tfrac{n\l}{4}-k)-1\right)-3 = -6k+2n\l-4.\end{align*}
Therefore,
\begin{align*}
{-\s}(\beta^n)
&\geq
{-\s}(\widetilde{\beta^n})-2(\frac{n\l}{4}-k)
 \geq
-3k+n\l-2-2(\frac{n\l}{4}-k)
\\
&=
-k+\frac{n\l}{2}-2
 \geq
-\frac{n\l}{12}+\frac{nl}{2}-2
 = \frac{5n\l}{12}-2,
\end{align*}
and thus \[
{-\S}(\beta) = \lim_{n\to\infty}\frac{\s(\beta^n)}{n} \geq  \frac{5\l}{12} = \frac{5}{12}\l(\beta).
         \]

It remains to prove~\eqref{eq:sigbetabetar}. For this we use that the full twist on $4$ strands \[LL = RR = \Delta\Delta = \Delta^2\] 
commutes with every $4$-braid:
for all $\alpha$ in $B_4$, we have $\alpha\Delta^2=\Delta^2\alpha$; compare~\cite{Garside_69_TheBraidGroup}.

Let us study $\widetilde{\beta^n}(\widetilde{\beta^n})^\text{rot}=\widetilde{B_1}\widetilde{B_2}\cdots \widetilde{B_{\frac{n\l}{4}}}\widetilde{B_{\frac{n\l}{4}}}^\text{rot}\cdots \widetilde{B_1}^\text{rot}.$
The braid diagrams for $L,R$, and $\Delta$ and their rotation by $180$ degrees define the same braid: we have \[L=~\xygraph{
!{0;/r0.7pc/:}
[u(1.5)]!{\xcapv[3]@(0)}
[lu]!{\xcapv[3]@(0)}
[lu]!{\xcapv[3]@(0)}
[lu]!{\xcapv[3]@(0)}
[u(0.5)rr] !{\xcaph[1]@(0)}
[d(0.5)ll] !{\xcaph[1]@(0)}
[d(0.5)] !{\xcaph[1]@(0)}
[lll] !{\xcaph[1]@(0)}
[d(0.5)] !{\xcaph[1]@(0)}
[d(0.5)ll] !{\xcaph[1]@(0)}
}\hspace{-1.75pc}=L^\text{rot},
~R
=~\xygraph{
!{0;/r0.7pc/:}
[u(1.5)]!{\xcapv[3]@(0)}
[lu]!{\xcapv[3]@(0)}
[lu]!{\xcapv[3]@(0)}
[lu]!{\xcapv[3]@(0)}
[u(0.5)] !{\xcaph[1]@(0)}
[d(0.5)] !{\xcaph[1]@(0)}
[d(0.5)] !{\xcaph[1]@(0)}
[lll] !{\xcaph[1]@(0)}
[d(0.5)] !{\xcaph[1]@(0)}
[d(0.5)] !{\xcaph[1]@(0)}
}\hspace{-1.75pc}
=R^\text{rot}
\text{ and }\Delta
=~\xygraph{
!{0;/r0.7pc/:}
[u(1.25)]!{\xcapv[2.5]@(0)}
[lu]!{\xcapv[2.5]@(0)}
[lu]!{\xcapv[2.5]@(0)}
[lu]!{\xcapv[2.5]@(0)}
[u(0.5)r] !{\xcaph[1]@(0)}
[d(0.5)] !{\xcaph[1]@(0)}
[lll] !{\xcaph[1]@(0)}
[d(0.5)] !{\xcaph[1]@(0)}
[d(0.5)] !{\xcaph[1]@(0)}
[lll] !{\xcaph[1]@(0)}
}\hspace{-1.5pc}
=~\xygraph{
!{0;/r0.7pc/:}
[u(1.25)]!{\xcapv[2.5]@(0)}
[lu]!{\xcapv[2.5]@(0)}
[lu]!{\xcapv[2.5]@(0)}
[lu]!{\xcapv[2.5]@(0)}
[u(0.5)rr] !{\xcaph[1]@(0)}
[lll] !{\xcaph[1]@(0)}
[d(0.5)] !{\xcaph[1]@(0)}
[d(0.5)] !{\xcaph[1]@(0)}
[lll] !{\xcaph[1]@(0)}
[d(0.5)] !{\xcaph[1]@(0)}
}\hspace{-1.75pc}
=\Delta^\text{rot}.\] Therefore, if
$\widetilde{B_{\frac{n\l}{4}}}$ is $L,R$, or $\Delta$, then
$\widetilde{B_{\frac{n\l}{4}}}(\widetilde{B_{\frac{n\l}{4}}})^\text{rot}=\Delta^2$; and thus,
\begin{align*}
\widetilde{\beta^n}(\widetilde{\beta^n})^\text{rot}
&=
\widetilde{B_1}\widetilde{B_2}\cdots \widetilde{B_{\frac{n\l}{4}-1}}\Delta^2\widetilde{B_{\frac{n\l}{4}-1}}^\text{rot}\cdots \widetilde{B_1}^\text{rot}
\\
&=
\Delta^2\widetilde{B_1}\widetilde{B_2}\cdots \widetilde{B_{\frac{n\l}{4}-1}}\widetilde{B_{\frac{n\l}{4}-1}}^\text{rot}\cdots \widetilde{B_1}^\text{rot}.
\end{align*}
Otherwise, that is if $\widetilde{B_{\frac{n\l}{4}}}$ is
$~\xygraph{
!{0;/r0.7pc/:}
[u(1.25)]!{\xcapv[2.5]@(0)}
[lu]!{\xcapv[2.5]@(0)}
[lu]!{\xcapv[2.5]@(0)}
[lu]!{\xcapv[2.5]@(0)}
[u(0.5)r] !{\xcaph[1]@(0)}
[d(0.5)ll] !{\xcaph[1]@(0)}
[d(0.5)l] !{\xcaph[1]@(0)}
[d(0.5)] !{\xcaph[1]@(0)}
}\hspace{-1.5pc}$ or $~\xygraph{
!{0;/r0.7pc/:}
[u(1.25)]!{\xcapv[2.5]@(0)}
[lu]!{\xcapv[2.5]@(0)}
[lu]!{\xcapv[2.5]@(0)}
[lu]!{\xcapv[2.5]@(0)}
[u(0.5)r] !{\xcaph[1]@(0)}
[d(0.5)] !{\xcaph[1]@(0)}
[d(0.5)l] !{\xcaph[1]@(0)}
[d(0.5)ll] !{\xcaph[1]@(0)}
}\hspace{-1.5pc}$, we apply Corollary~\ref{cor:SigIsaQM} to get
\[{-\s}(\widetilde{\beta^n}(\widetilde{\beta^n})^\text{rot}) \geq  -3 {-\s}(\widetilde{B_{\frac{n\l}{4}}}\widetilde{B_{\frac{n\l}{4}}}^\text{rot})
{-\s}(\widetilde{B_1}\widetilde{B_2}\cdots \widetilde{B_{\frac{n\l}{4}-1}}\widetilde{B_{\frac{n\l}{4}-1}}^\text{rot}\cdots \widetilde{B_1}^\text{rot}),\]
where \[{-\s}(\widetilde{B_{\frac{n\l}{4}}}\widetilde{B_{\frac{n\l}{4}}}^\text{rot})=\b(\widetilde{B_{\frac{n\l}{4}}}\widetilde{B_{\frac{n\l}{4}}}^\text{rot})=5\]
since the closure of $\widetilde{B_{\frac{n\l}{4}}}(\widetilde{B_{\frac{n\l}{4}}})^\text{rot}$ is a connected sum of two $T(2,2)$ and one $T(2,4)$.
Therefore, we have 
\[{-\s}(\widetilde{\beta^n}(\widetilde{\beta^n})^\text{rot}) \geq  2 {-\s}(\widetilde{B_1}\widetilde{B_2}\cdots \widetilde{B_{\frac{n\l}{4}-1}}\widetilde{B_{\frac{n\l}{4}-1}}^\text{rot}\cdots \widetilde{B_1}^\text{rot}).\]

Applying the same argument to
\[\widetilde{B_1}\widetilde{B_2}\cdots \widetilde{B_{\frac{n\l}{4}-1}}\widetilde{B_{\frac{n\l}{4}-1}}^\text{rot}\cdots \widetilde{B_1}^\text{rot}\vel\Delta^2\widetilde{B_1}\widetilde{B_2}\cdots \widetilde{B_{\frac{n\l}{4}-1}}\widetilde{B_{\frac{n\l}{4}-1}}^\text{rot}\cdots \widetilde{B_1}^\text{rot},\]
respectively, and
continuing inductively, we get
\[{-\s}(\widetilde{\beta^n}(\widetilde{\beta^n})^\text{rot}) \geq  2k {-\s}(\Delta^{2(\frac{n\l}{4}-k)}).\]

Now~\eqref{eq:sigbetabetar} follows from 
Murasugi's formula for the signature of torus links of braid index $4$, which implies that $\s(\Delta^{2j})=\s(T(4,4j))=-8j+1$ holds for all positive integers $j$; see~\cite[Proposition 9.2]{Murasugi_OnClosed3Braids} or use~\cite[Theorem~5.2]{GLM}.
\end{proof}

\def\cprime{$'$} \def\cprime{$'$}

\end{document}